 \newtheorem*{akn}{Acknowledgments}
 \newenvironment{akn*}{\begin{akn}\em}{\end{akn}}
\definecolor{darkgreen}{rgb}{.1,.7,.3}
\numberwithin{equation}{section}
\newtheorem{theo}{Theorem}[section]
\newtheorem{prop}[theo]{Proposition}
\newtheorem{dfntn}[theo]{Definition}
\newtheorem{rem}[theo]{Remark}
\newtheorem{claim}[theo]{Claim}
\newtheorem{ass}[theo]{Assumption}
\newtheorem{setup}[theo]{Set-up}
\newenvironment{setup*}{\begin{setup}\em}{\end{setup}}
\newtheorem{case1}[theo]{Case I:}
\newtheorem{case2}[theo]{Case II:}
\newtheorem{ex}[theo]{Example}
\newtheorem{fact}[theo]{FACT}
\newtheorem{facts}[theo]{FACTS}
\newenvironment{rem*}{\begin{rem}\em}{\end{rem}}
\newenvironment{ex*}{\begin{ex}\em}{\end{ex}}
\newenvironment{claim*}{\begin{claim}\em}{\end{claim}}
\newenvironment{facts*}{\begin{facts}\em}{\end{facts}}
\newenvironment{fact*}{\begin{fact}\em}{\end{fact}}
\newenvironment{case1*}{\begin{case1}\em}{\end{case1}}
\newenvironment{case2*}{\begin{case2}\em}{\end{case2}}
\newcommand{\num}{\equiv}
\newcommand{\PP}{\mathbb P}
\newcommand{\Oc}{\mathcal O}
\newcommand{\sA}{\mathscr{A}}
\newcommand{\sB}{\mathscr{B}}
\newcommand{\FF}{\mathbb{F}}
\newcommand{\Ee}{\mathcal{E}_e}
\newcommand{\E}{\mathcal{E}}
\newcommand{\cH}{\mathcal{H}}
\newcommand{\cE}{\mathcal{E}}
\newcommand{\cL}{\mathcal{L}}
\newcommand{\cF}{\mathcal{F}}
\newcommand{\cM}{\mathcal{M}}
\newcommand{\cG}{\mathcal{G}}
\newcommand{\cU}{\mathcal{U}}
\newcommand{\cO}{\mathcal{O}}
\title{A note on some moduli spaces of Ulrich Bundles}
\author{Maria Lucia Fania}
\address{Maria Lucia Fania\\ Dipartimento di Ingegneria e Scienze dell'Informazione e Matematica\\
Universit\`{a} degli Studi di L'Aquila\\
Via Vetoio Loc. Coppito\\67100 L'Aquila\\Italy}
\email{marialucia.fania@univaq.it}
\author{Flaminio Flamini}
\address{Flaminio Flamini\\Dipartimento di Matematica\\ Universit\`a degli Studi di Roma
Tor Vergata \\ Viale della Ricerca Scientifica, 1 - 00133 Roma\\Italy}
\email{flamini@mat.uniroma2.it}
\subjclass[2020]{Primary 14J30, 14J26, 14J60, 14C05; Secondary 14N30}
\keywords{Ulrich bundles, $3$-folds, ruled surfaces, moduli, deformations}
\thanks{{\bf Acknowledgments}. The first author has been supported by PRIN 2017SSNZAW. The second author has been partially supported by  the MIUR Excellence Department Project MatMod@TOV  MIUR CUP-E83C23000330006, 2023-2027,  awarded to the Department of Mathematics, University of Rome Tor Vergata.  Both authors are members of  INdAM-GNSAGA.  The authors would like to thank M. Aprodu, since this article came about as a result of some of his questions posed during the conference  ``Algebraic Geometry in L'Aquila", July 18-21, 2023, regarding possible irreducibility of the moduli spaces studied in \cite{fa-fl2}. Finally, the authors would like to thank the anonymous Referee for his/her advices, questions and remarks which have also improved the presentation of the present paper.}
\begin{document}

\maketitle

\begin{center}
{\it  To Enrique Arrondo, in the occasion of his  60th birthday}
\end{center}

\begin{abstract}  We prove that the modular component $\mathcal M(r)$, constructed  in the Main  Theorem in  \cite{fa-fl2}, of Ulrich vector bundles of  rank $r$ and given Chern classes, on suitable $3$-fold scrolls $X_e$ over Hirzebruch surfaces $\mathbb{F}_{e\ge 0}$,  which arise as tautological embeddings of  projectivization of very-ample vector bundles  on $\mathbb{F}_e$,
is generically smooth, irreducible and  unirational.   A stronger result holds for the suitable associated moduli space $\mathcal M_{\FF_e}(r)$  of  vector bundles of  rank $r$ and given Chern classes on $\mathbb{F}_e$,  Ulrich w.r.t. the very ample polarization $c_1({\cE}_e) = \Oc_{\FF_e}(3, b_e),$ which turns out to be generically smooth, irreducible and  unirational. 
\end{abstract}


\section*{Introduction} Let $X$ be a smooth irreducible projective variety of dimension $n \geq 1$, polarized by a  very ample divisor $H$ on $X$. The existence  of  vector bundles $\cU$  on $X$ which are 
\emph{Ulrich with respect to  $\mathcal O_X(H)$ } has interested various authors. 

For some specific classes of varieties such problem has being attacked, see for instance \cite{CM,CMP,ant,f-pl,a-c-mr,fa-fl2}. Whenever such bundles do exist, since they are always {\em semistable} (in the sense of Gieseker-Maruyama, cf. also \S\,\ref{section1} below) and also {\em slope-semistable} (cf. \cite[Def.\,2.7,\,Thm.\,2.9-(a)]{c-h-g-s}), one is interested in knowing if these bundles are also {\em stable}, equivalently {\em slope-stable} (cf. \cite[Def.\,2.7,\,Thm.\,2.9-(c)]{c-h-g-s}). Furthermore, from their semi-stability, such rank-$r$ vector bundles give rise to points in a moduli space, say $M:= M^{ss}(r; c_1, c_2, \ldots, c_k)$, where 
$k := {\rm min} \{r,\,n\}$, parametrizing ($S$-equivalence classes of) semistable sheaves of given rank $r$ and given Chern classes $c_i$ on $X$, $1 \leq i \leq k$ (cf. \cite[p.\, 1250083-9]{c-h-g-s}). Therefore, one is also interested e.g. in understanding: whether $M$ contains at least an irreducible component, say 
$\mathcal M(r)$, which is generically smooth, i.e. reduced, or even smooth; to which sheaf on $X$ corresponds the general point of such a component $\mathcal M(r)$; what can be said about the {\em birational geometry} of $\mathcal M(r)$, namely if it is perhaps rational/unirational; finally, if by chance $M$ turns out to be also irreducible, 
that  is, $M = \mathcal M(r)$.

 In this paper we are interested in some  of the aforementioned properties for the moduli spaces of Ulrich vector bundles on a variety    $X_e$  which  is a $3$-fold scroll over a Hirzebruch surface $\mathbb{F}_e$, with $e \geq 0$. 
More precisely  on $3$-fold scrolls $X_e$ arising as embedding, via very-ample tautological line bundles $\Oc_{\PP(\Ee)}(1)$, of projective bundles $\PP(\E_e)$ over 
$\mathbb{F}_e$, where $\mathcal E_e$ are very-ample rank-$2$ vector bundles on  $\mathbb{F}_e$ with Chern classes 
$c_1 (\mathcal E_e)$ numerically equivalent to $3 C_e + b_ef$ and $c_2(\mathcal E_e) =k_e$, where $C_e$ and $f$ are,    as customary, generators 
of ${\rm Num}(\mathbb{F}_e)$ as in \cite[V, Prop.\,2.3]{H}  and where $b_e$ and $k_e$ are integers satisfying some natural numerical conditions. 
We will set $\xi := \mathcal O_{X_e}(1)$ the hyperplane line bundle of the embedded 
$3$-fold scroll, which we will also call {\em tautological polarization of} $X_e$, as 
$(X_e,\xi) \cong (\PP(\mathcal E_e), \Oc_{\PP(\mathcal E_e)}(1))$.

The existence of Ulrich bundles on such $3$-folds $X_e$ has been considered in \cite{fa-fl2}, where  it was proved that $X_e$  does  not support any Ulrich line bundle  w.r.t. $\xi$, unless $e = 0$. As to Ulrich vector bundles of rank  $r \geq 2$, it was proved  in \cite{fa-fl2}  that   the moduli space $M$, in the above sense, arising from  rank-$r$ vector bundles $\cU_r$ on $X_{e\ge 0}$ which are Ulrich w.r.t. $\xi$ and with first Chern class
\begin{eqnarray*}c_1(\cU_r) =
    \begin{cases}
      r \xi + \varphi^*\Oc_{\FF_e}(3,b_e -3) + \varphi^*\Oc_{\FF_e}\left(\frac{r-3}{2}, \frac{(r-3)}{2}(b_e - e -2)\right), & \mbox{if $r$ is odd}, \\
      r \xi + \varphi^*\Oc_{\FF_e}\left(\frac{r}{2}, \frac{r}{2}(b_e-e-2)\right), & \mbox{if $r$ is even}.
    \end{cases}\end{eqnarray*}
    is not empty and it contains a generically smooth component $\mathcal M(r)$ of dimension 
		\begin{eqnarray*}\dim (\mathcal M(r) ) = \begin{cases} \left(\frac{(r -3)^2}{4}+ 2 \right)(6 b_e - 9e -4) + \frac{9}{2}(r-3) (2b_e-3e), & \mbox{if $r$ is odd}, \\
			 \frac{r^2}{4} (6b_e- 9e-4) +1 , & \mbox{if $r$ is even}.
    \end{cases}
    \end{eqnarray*} The general point $[\cU_{r}] \in \mathcal M(r)$  has been proved to 
correspond to a  slope-stable vector bundle, of slope w.r.t. $\xi$ given by 
$\mu(\cU_r) = 8 b_e - k_e - 12 e - 3$ (see Theorem \ref{Main Theorem} below, for more details). 

As a consequence of such result and a natural one-to-one correspondence among rank-$r$ vector bundles on $X_e$, of the form $\xi \otimes \varphi^*(\mathcal F)$, which are Ulrich w.r.t. $\xi$ on $X_e$, and rank-$r$ vector bundles on $\FF_e$, of the form $\mathcal F (c_1(\mathcal E_e))$, which are Ulrich w.r.t. $c_1(\mathcal E_e) = 3 C_e + b_e f$,  in  \cite{fa-fl2} we  have deduced Ulrichness results for vector bundles on the base surface $\FF_e$ with respect to naturally associated very ample polarization $c_1(\mathcal E_e)$, see Theorem \ref{thm:UlrichFe} for more details. 

By  a result  of Antonelli,  \cite[Theorem 1.2]{ant}, if $\cH_r$ is a rank-$r$ vector bundle on $\FF_e$ which is Ulrich with respect to  a very ample polarization  of the form $\Oc_{\FF_e}(a,b)$  and with $c_1(\cH_r)=\Oc_{\FF_e}(\alpha,\beta)$,  then $\cH_r$  must fit into a short exact sequence of the form
\begin{eqnarray*}
\xymatrix{ 0 \to \mathcal O_{\FF_e}(a-1, b - e -1)^{\oplus \gamma}\ar[r]^-{\phi} &   \mathcal O_{\FF_e}(a-1, b - e)^{\oplus \delta} \oplus \mathcal O_{\FF_e}(a, b-1)^{\oplus \tau} \to\cH_r  \to 0,}                                  
 \end{eqnarray*}
where $\gamma, \delta$  and  $\tau$ are suitably defined by  $r, \alpha, \beta, a, b, e$ (cfr. \eqref{eq:coker}).
This fact will  be useful in the present note to give further information about our  modular components $\mathcal M(r)$ as in \cite{fa-fl2}.  Our main results in this paper are the following

\bigskip

\noindent
{ \bf Theorem A} (cf. Theorem \ref{Main Theorem1}, below) {\em
 For any integer $e \geq 0$, let  $\mathbb{F}_e$ be the Hirzebruch surface 
 and let $\Oc_{\FF_e}(\alpha,\beta)$ denote the line bundle 
$\alpha C_e + \beta f$ on $\mathbb{F}_e$, where $C_e$ and $f$ are   generators of ${\rm Num}(\mathbb{F}_e)$ (cf. \cite[V, Prop.\,2.3]{H}). Let $(X_e, \xi)$ be a $3$-fold scroll over $\mathbb{F}_e$ as above, 
where $\varphi: X_e \to \FF_e$ denotes the scroll map. Then the moduli space of rank-$r\geq 2$ vector bundles $\cU_r$ on $X_e$ which are Ulrich w.r.t. $\xi$ and with first Chern class 
\begin{eqnarray*}c_1(\cU_r) =
    \begin{cases}
      r \xi + \varphi^*\Oc_{\FF_e}(3,b_e -3) + \varphi^*\Oc_{\FF_e}\left(\frac{r-3}{2}, \frac{(r-3)}{2}(b_e - e -2)\right), & \mbox{if $r$ is odd}, \\
      r \xi + \varphi^*\Oc_{\FF_e}\left(\frac{r}{2}, \frac{r}{2}(b_e-e-2)\right), & \mbox{if $r$ is even}
    \end{cases}\end{eqnarray*}
    is not empty and it contains a generically smooth component $\mathcal M(r)$, which is of dimension 
		\begin{eqnarray*}\dim (\mathcal M(r) ) = \begin{cases} \left(\frac{(r -3)^2}{4}+ 2 \right)(6 b_e - 9e -4) + \frac{9}{2}(r-3) (2b_e-3e), & \mbox{if $r$ is odd}, \\
			 \frac{r^2}{4} (6b_e- 9e-4) +1 , & \mbox{if $r$ is even},
    \end{cases}
    \end{eqnarray*}  (see  Theorem \ref{Main Theorem}) and which is moreover {\em unirational}.   }
   \bigskip 

For the moduli space of  rank-$r \ge 2$  bundles on  $\FF_e$, the base of the scroll $X_e$,  which are  Ulrich w.r.t. the polarization $c_1(\mathcal E_e) = \Oc_{\FF_e}(3, b_e)$,  a stronger  result holds;  precisely 

\bigskip

\noindent
{ \bf Theorem B} (cf. Theorem \ref{thm:moduli_UlrichFe}, below) {\em
 Let $\mathcal M_{\FF_e}(r)$ be  the moduli space of rank-$r$ vector bundles $\cH_{r}$ on $\FF_e$ which are Ulrich w.r.t. $c_1(\E_e)$ and  with first Chern class
\begin{eqnarray*}c_1(\cH_r) =
    \begin{cases}
     \Oc_{\FF_e}(3 (r+1), (r+1)b_e-3) \otimes \Oc_{\FF_e}\left(\frac{r-3}{2}, \frac{(r-3)}{2}(b_e - e -2)\right), & \mbox{if $r$ is odd}, \\
      \Oc_{\FF_e}(3 r, r b_e) \otimes  \Oc_{\FF_e}\left(\frac{r}{2}, \frac{r}{2}(b_e-e-2)\right), & \mbox{if $r$ is even}. 
    \end{cases}\end{eqnarray*} Then $\mathcal M_{\FF_e}(r) $ is generically smooth, of dimension  
    \begin{eqnarray*}\dim (\mathcal M_{\FF_e}(r) ) = \begin{cases} \left(\frac{(r -3)^2}{4}+ 2 \right)(6 b_e - 9e -4) + \frac{9}{2}(r-3) (2b_e-3e), & \mbox{if $r$ is odd}, \\
			 \frac{r^2}{4} (6b_e- 9e-4) +1 , & \mbox{if $r$ is even}, 
    \end{cases} 
\end{eqnarray*}  (see  Theorem \ref{thm:UlrichFe}) and moreover it is {\em irreducible} and  {\em unirational}.

}

\medskip

The above theorems extend unirationality results in  \cite{ant} and \cite{CM}. 
\medskip

The paper is structured as follows. In Section \ref{section1} we fix notation and terminology. In  Section \ref{section2} we recall some  of the known results that we will use throughout  the paper.  In Section \ref{section3} we state and prove our new main results.


\section{Notation and terminology} \label{section1}

 In this paper we work over $\mathbb C$. All schemes will be endowed with the Zariski topology. We will  interchangeably use the terms rank-$r$ vector bundle on a smooth, projective variety $X$ and rank-$r$ locally free sheaf. In particular, 
 sometimes, to ease some formulas, with a small abuse of notation we  identify divisor classes with the corresponding line bundles, interchangeably using additive and tensor-product notation. The dual bundle of a rank-$r$ vector bundle $\mathcal F$ on $X$ will be denoted by $\mathcal F^{\vee}$; thus, if $L$ is of rank-$1$, i.e. it is a line bundle, we  interchageably use $L^{\vee}$ or $-L$. If $M$ is a {\em moduli space}, parametrizing objects modulo a given equivalence relation, and if $Y$ is a representative of an equivalence class in $M$,  we will denote by $[Y] \in M$ the point corresponding to $Y$. For non-reminded general terminology, we refer the reader to \cite{H}). 

Because our object  will be  Ulrich bundles, we recall  their definition and basic properties. 

\begin{dfntn}\label{def:Ulrich} Let $X\subset \PP^N$ be a smooth, irreducible, projective variety of dimension $n$  and let $H$  be a hyperplane section of $X$.
A vector bundle $\cU$  on $X$ is said to be  {\em Ulrich} with respect to $\mathcal O_X(H)$  if
\begin{eqnarray*}
H^{i}(X, \cU(-jH))=0 \quad \text{for}  \quad i=0, \cdots, n \quad  \text{and} \quad 1 \leq  j \leq   n.
\end{eqnarray*}
 \end{dfntn}

\begin{dfntn}\label{def:special} Let $X\subset \PP^N$ be a smooth, irreducible, projective variety of dimension $n$  polarized by  $\mathcal O_X(H)$,  where $H$ is a hyperplane section of $X$,   and let $\cU$  be  a rank-$2$ vector bundle  on  $X$ which is  {\em Ulrich} with respect to $\mathcal O_X(H)$. Then $\cU$ is said to be {\em special} if $c_1(\cU) = K_{X} +(n + 1)H.$
\end{dfntn}
 
 For the reader's convenience, we briefly remind facts  concerning (semi)stability and slope-(semi)stability properties of Ulrich bundles as in \cite[Def.\;2.7]{c-h-g-s}. Let $X$ be a smooth, irreducible, projective variety and let $\mathcal F$ be a vector bundle on $X$; recall that $\mathcal F$ is said to be {\em semistable} (in the sense of Gieseker-Maruyama) if for every non-zero coherent subsheaf $\mathcal G \subset \mathcal F$, with $0 < {\rm rk}(\mathcal G) := \mbox{rank of} \; \mathcal G < {\rm rk}(\mathcal F)$, the inequality
$\frac{P_{\mathcal G}}{{\rm rk}(\mathcal G)} \leq \frac{P_{\mathcal F}}{{\rm rk}(\mathcal F)}$ holds true, where 
$P_{\mathcal G}$ and $P_{\mathcal F}$ are the {\em Hilbert polynomials} of the sheaves. Furthermore, $\mathcal F$ is {\em stable} if strict inequality above holds.  Similarly, recall that the {\em slope} of a vector  bundle $\mathcal F$ (w.r.t. a given polarization $\mathcal O_X(H)$ on $X$) is defined to be  $\mu(\mathcal F) := \frac{c_1(\mathcal F) \cdot H^{n-1}}{{\rm rk}(\mathcal F)}$; the bundle $\mathcal F$ is said to be {\em $\mu$-semistable}, or even {\em slope-semistable}, if for every non-zero coherent subsheaf $\mathcal G \subset \mathcal F$ with $0 < {\rm rk}(\mathcal G)   < {\rm rk}(\mathcal F)$, one has $\mu (\mathcal G) \leq \mu(\mathcal F)$. The bundle $\mathcal F$ is {\em $\mu$-stable}, or  {\em slope-stable}, if  strict inequality holds. 

The two definitions of (semi)stability are in general related as follows (cf. e.g. \cite[\S\;2]{c-h-g-s}): 
\begin{eqnarray*}\mbox{slope-stability} \Rightarrow \mbox{stability} \Rightarrow \mbox{semistability} \Rightarrow \mbox{slope-semistability}.\end{eqnarray*}
If $\mathcal U$ is in particular a rank-$r$ vector bundle which is Urlich w.r.t. 
$\mathcal O_X(H)$, then $\mathcal U$ is always semistable, so also slope-semistable 
(cf. \cite[Thm.\;2.9-(a)]{c-h-g-s}); moreover, for $\mathcal U$  the notions of stability and slope-stability coincide  (cf. \cite[Thm.\;2.9-(c)]{c-h-g-s}).

 As for the projective variety which will be the support of Ulrich bundles 
we are interested in,  throughout this work we will denote it by   $X_e$  and it will  be a $3$-dimensional scroll over the Hirzebruch surface 
$\FF_e := \PP(\Oc_{\PP^1} \oplus\Oc_{\PP^1}(-e))$, with $e \geq  0$ an integer. 

More precisely, let $\pi_e : \FF_e \to \PP^1$ be the natural projection onto the base. Then,  as in \cite[V, Prop.\,2.3]{H}, 
${\rm Num}(\FF_e) = \mathbb{Z}[C_e] \oplus \mathbb{Z}[f],$ where:

\smallskip

\noindent
$\bullet$ $f := \pi_e^*(p)$, for any $p \in \PP^1$, whereas

\smallskip

\noindent
$\bullet$ $C_e$ denotes either the unique section corresponding to the morphism of vector bundles on $\PP^1$ \linebreak
$\Oc_{\PP^1} \oplus\Oc_{\PP^1}(-e) \to\!\!\!\to \Oc_{\PP^1}(-e)$, when $e>0$, or the fiber of the other ruling different from that 
induced by $f$, when otherwise $e=0$. 

\smallskip

\noindent
In particular$$C_e^2 = - e, \; f^2 = 0, \; C_ef = 1.$$

Let $\Ee$ be a rank-$2$ vector bundle over $\FF_e$ and let $c_i(\mathcal{E}_e)$ be its  $i^{th}$-Chern class. Then $c_1( \mathcal{E}_e) \num a C_e + b f$, for some $ a, b \in \mathbb Z$, and $c_2(\cE_e) \in \mathbb Z$. For the line bundle $\cL \num \alpha C_e + \beta f$ we will also use the notation $\Oc_{\FF_e}(\alpha,\beta)$.

From now on, we will consider the following:

 \begin{ass}\label{ass:AB}  Let $e \geq  0$, $b_e$, $k_e$ be integers such that
\begin{equation}\label{(iii)}
b_e-e< k_e< 2b_e-4e,
\end{equation} and let ${\mathcal E}_e$ be a rank-$2$ vector bundle over $\FF_e$, with
\begin{eqnarray*}
c_1({\cE}_e) \num 3 C_e + b_e f \;\; {\rm and} \;\; c_2({\cE}_e) = k_e,
\end{eqnarray*} which fits in the exact sequence 
\begin{equation}\label{eq:al-be}
0 \to A_e \to {\cE}_e \to B_e \to 0,
\end{equation} where $A_e$ and $B_e$ are line bundles on $\FF_e$ such that
\begin{equation}\label{eq:al-be3}
A_e \num 2 C_e + (2b_e-k_e-2e) f \;\; {\rm and} \;\; B_e \num C_e + (k_e - b_e + 2e) f
\end{equation}
\end{ass} \color{black} From \eqref{eq:al-be}, in particular, one has $c_1({\cE}_e) = A_e + B_e \;\; {\rm and} \;\; c_2({\cE}_e) = A_eB_e$.
\bigskip

By  results in \cite{fa-fl2},   ${\mathcal E}_e$  as above, turns out to be very ample on $\FF_e$.  Thus  we take $X_e$   to be the $3$-fold scroll arising as embedding, via very-ample tautological line bundle $\Oc_{\PP(\Ee)}(1)$, of the projective bundle $\PP(\E_e)$.

\bigskip

\section{Preliminaries}\label{section2}
%
%
In this section, for the reader convenience,  we state some of  the  known results that we will be using in the sequel.

The following Theorem \ref{pullback}, (cf.\!\!\cite[Theorem 2.4]{f-lc-pl}) states under which conditions an Ulrich bundle on the base of the scroll gives rise to a bundle on the scroll itself which is Ulrich w.r.t.  the {\em tautological polarization} 
$\xi$. 

\begin{theo}\label{pullback}{\rm(\!\!\cite[Theorem 2.4]{f-lc-pl})}  Let $(S,H)$ be a polarized surface, with $H$ a very ample line bundle, and let $\mathcal E$ be a rank-$2$ vector bundle on $S$ such that $\mathcal E$ is (very) ample and spanned. 
Let $\mathcal F$ be a rank-$r \geq 1$ vector bundle on $S$. Let $(X,\xi) \cong (\PP(\mathcal E), \Oc_{\PP(\mathcal E)}(1))$ be a $3$-fold scroll over $S$, where 
 $\xi$ is the tautological polarization,  and let $X \xrightarrow{\varphi} S$ denote the scroll map. Then  the vector bundle $\mathcal U:= \xi \otimes \varphi^*(\mathcal F)$ is Ulrich with respect to $\xi$ if and only if the bundle $\mathcal F$ is such that
\begin{equation}\label{needed to pullback}
\begin{array}{ccc}
  H^i(S,\mathcal F)=0 & \text{and} & H^i(S,\mathcal F(-c_1(\mathcal E)))=0, \; 0 \leq i \leq 2.
\end{array}
\end{equation} In particular, if $c_1(\mathcal E)$ is very ample on $S$, then the rank-$r$ vector bundle on $X$, $ \mathcal U=\xi \otimes \varphi^*(\mathcal F)$, is Ulrich with respect to $\xi$  if and only if the rank-$r$ vector bundle on $S$,  $\mathcal F (c_1(\mathcal E))$, is Ulrich with respect to $c_1(\mathcal E)$. 
\end{theo}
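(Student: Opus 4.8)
The plan is to reduce every cohomology group on the scroll $X = \PP(\mathcal E)$ to a cohomology group on the base surface $S$ by pushing forward along $\varphi$, and then to read off the Ulrich vanishing for $\mathcal U$ directly from the Leray spectral sequence. Since $\dim X = 3$, by Definition \ref{def:Ulrich} the bundle $\mathcal U = \xi \otimes \varphi^*(\mathcal F)$ is Ulrich with respect to $\xi$ precisely when $H^i(X, \mathcal U(-j\xi)) = 0$ for $0 \le i \le 3$ and $1 \le j \le 3$. The first move is to rewrite the twists as $\mathcal U(-j\xi) = \Oc_X((1-j)\xi) \otimes \varphi^*(\mathcal F)$, so that for $j = 1, 2, 3$ one is studying $\varphi^*(\mathcal F)$, $\Oc_X(-\xi) \otimes \varphi^*(\mathcal F)$ and $\Oc_X(-2\xi) \otimes \varphi^*(\mathcal F)$, respectively.

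The key computation --- and the step I expect to be the most delicate --- is the determination of the direct images $R^q\varphi_*\Oc_X(m\xi)$ for $m = 0, -1, -2$. Since the fibres of $\varphi$ are $\PP^1$, only $q = 0, 1$ can be nonzero, and I would invoke the standard identities for a $\PP^1$-bundle: $\varphi_*\Oc_X(m\xi) = \mathrm{Sym}^m\mathcal E$ (which vanishes for $m < 0$), together with $R^1\varphi_*\Oc_X(m\xi) = 0$ for $m \ge -1$, and, via relative Serre duality along the fibres using $\omega_{X/S} = \Oc_X(-2\xi) \otimes \varphi^*(\det\mathcal E)$, the identity $R^1\varphi_*\Oc_X(-2\xi) = (\det\mathcal E)^\vee = \Oc_S(-c_1(\mathcal E))$. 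Pinning down the twist by $\det\mathcal E$ and the correct sign is exactly where the $\PP(\mathcal E)$-convention must be handled with care, and every subsequent identification depends on it. Combining these with the projection formula yields, for $j = 1$, that $\varphi^*(\mathcal F)$ has $R^0\varphi_* = \mathcal F$ and $R^1\varphi_* = 0$; for $j = 2$, that both direct images vanish; and for $j = 3$, that $R^0\varphi_* = 0$ while $R^1\varphi_* = \mathcal F(-c_1(\mathcal E))$.

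Feeding these into the Leray spectral sequence $E_2^{p,q} = H^p(S, R^q\varphi_*(-)) \Rightarrow H^{p+q}(X, -)$, which degenerates because in each case at most one of the two rows survives, I would obtain $H^i(X, \mathcal U(-\xi)) = H^i(S, \mathcal F)$, then $H^i(X, \mathcal U(-2\xi)) = 0$ for all $i$ (so the middle twist imposes no condition), and finally $H^i(X, \mathcal U(-3\xi)) = H^{i-1}(S, \mathcal F(-c_1(\mathcal E)))$. Because $S$ is a surface the relevant indices on the base run over $0 \le i \le 2$, and so the full list of vanishings for $\mathcal U$ collapses exactly to $H^i(S, \mathcal F) = 0$ and $H^i(S, \mathcal F(-c_1(\mathcal E))) = 0$ for $0 \le i \le 2$, which is precisely \eqref{needed to pullback}.

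For the ``In particular'' assertion I would simply match these two vanishings against the Ulrich definition for $\mathcal F(c_1(\mathcal E))$ on $S$ polarized by the very ample line bundle $H' := c_1(\mathcal E)$. Setting $\mathcal G := \mathcal F(c_1(\mathcal E))$, the surface-Ulrich conditions $H^i(S, \mathcal G(-jH')) = 0$ for $0 \le i \le 2$ and $j = 1, 2$ unwind to $H^i(S, \mathcal F) = 0$ (the case $j = 1$, where $\mathcal G(-H') = \mathcal F$) and $H^i(S, \mathcal F(-c_1(\mathcal E))) = 0$ (the case $j = 2$, where $\mathcal G(-2H') = \mathcal F(-c_1(\mathcal E))$) --- literally the two conditions in \eqref{needed to pullback}. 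Hence $\mathcal F(c_1(\mathcal E))$ is Ulrich with respect to $c_1(\mathcal E)$ if and only if \eqref{needed to pullback} holds, which in turn is equivalent to $\mathcal U$ being Ulrich with respect to $\xi$, completing the chain of equivalences.
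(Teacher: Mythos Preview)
Your argument is correct: the projection formula together with the standard computation of $R^q\varphi_*\Oc_X(m\xi)$ for $m=0,-1,-2$ reduces the three twists $j=1,2,3$ on $X$ exactly to the two conditions in \eqref{needed to pullback} on $S$, and the final paragraph matches these against the surface Ulrich definition with no gap.

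As for comparison, note that the present paper does \emph{not} supply a proof of this statement: Theorem~\ref{pullback} is quoted in the preliminaries as \cite[Theorem 2.4]{f-lc-pl} and used as a black box. Your Leray/projection-formula argument is precisely the expected proof (and is essentially what appears in the cited source), so there is nothing to contrast beyond the fact that you have written out what the paper only cites.
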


\bigskip
Viceversa,  starting with a rank-r vector bundle on the 3-fold scroll $(X, \xi)$ which is Ulrich w.r.t. $\xi$,  satisfying suitable properties, we recall how to obtain an Ulrich vector bundle of the same rank on the base $S$ of the scroll.
\medskip

Let $\varphi:X \rightarrow S$ be a $3$-fold scroll over a surface $S$.  Let us recall, see \cite[Theorem 11.1.2.]{BESO}, that a general hyperplane section $\widetilde{S}$ of $X$ has the structure of a blow-up of the base surface $S$ at $c_2(\cE)$ points and one can consider the following diagram:

\begin{equation}
    \label{composition}
    \xymatrix@-2ex{
    \widetilde{S} \, \, \ar@{^{(}->}[r]^i \ar[rd]_{\varphi'} & X \ar[d]^{\varphi} \\
                                      & S,
    }
\end{equation}
where $i$ is the inclusion and $\varphi'$ is the blow-up map, where we denote by $E_i$ the exceptional divisors of the latter map. 
 More precisely, if $\widetilde{S} \in |\xi|$ is a general hyperplane section of $X$, then it corresponds to the vanishing locus 
of a general global section $\widetilde{\sigma} \in H^0(X, \xi)$; since one has 
$H^0(X, \xi) \cong H^0( \PP(\mathcal E), \Oc_{\PP(\mathcal E)}(1)) \cong H^0(S, \mathcal E)$, then $\widetilde{\sigma}$  bijectively corresponds to a global
section $\sigma$ of $\mathcal E$ whose vanishing locus $Z:= V (\sigma)$ is a zero-dimensional subscheme on $S$ which is an element of $c_2(\mathcal E)$. From \cite[Theorem 11.1.2.]{BESO}, $\widetilde{S}$ turns out to be isomorphic to the blow-up of $\varphi': \widetilde{S} \to S$ at such points 
$Z$ and, for any $z \in Z$, the $\varphi$-fiber $\varphi^{-1}(z) := F_z$ of $X$ is contained in $\widetilde{S}$ as the $\varphi'$-exceptional divisor $E_z$ over the point $z$ of such a blow-up $\varphi'$. 

With this set-up, in \cite[Thm.\,6.1,\,Prop.\,6.2]{f-lc-pl}, the authors gave conditions to get bijective correspondences among rank-$r$ bundles on $X$ which are Ulrich w.r.t. the tautological polarization $\xi$ and rank-$r$ bundles on the base surface $S$ which are Ulrich w.r.t. the naturally related polarization as in Theorem \ref{pullback}. 

\begin{theo}\label{pushforwardulrich}{\rm(\!\!\cite[Theorem 6.1]{f-lc-pl})}
  Let $\varphi: X \rightarrow S$ be a $3$-fold scroll over a surface $S$ and let $\cG$ be a rank-$r$ vector bundle on $X$ which is Ulrich  with respect to the tautological polarization $\xi$, i.e. $(X,\xi) \cong (\PP(\mathcal E), \Oc_{\PP(\mathcal E)}(1))$.   Let us suppose that $c_1(\mathcal E)$ is very ample on $S$. Assume that on the  general fiber $F=\varphi^{-1}(s)$, $s\in S$, the vector bundle $\cG$ splits as follows: $\cG_{|F}\cong\cO_{\PP^1}(1)^{\oplus r}$. Then $\varphi_*(\cG\otimes i_*(\cO_{\tilde{S}}(\sum_{i=1}^k E_i))$, with  $k = |c_2(\mathcal E)|$, is a rank-$r$ vector bundle on $S$ which is Ulrich w.r.t. $c_1(\cE)$.
\end{theo}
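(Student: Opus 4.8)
The plan is to reduce the statement to a cohomological computation on the scroll $X=\PP(\mathcal E)$ itself, after first rewriting the pushforward in a tractable form. Set $E:=\sum_{i=1}^{k}E_i$ with $k=c_2(\mathcal E)$, and recall from \eqref{composition} that $\varphi'=\varphi\circ i$ is the blow-up of $S$ at $Z=V(\sigma)$. I would first record the identity $\xi_{|\widetilde{S}}=\varphi'^{*}c_1(\mathcal E)-E$, which for a rank-$2$ scroll is checked on intersection numbers against each $E_i$ and against $\xi_{|\widetilde{S}}$ itself. The splitting hypothesis $\cG_{|F}\cong\Oc_{\PP^1}(1)^{\oplus r}$ is the engine of the argument: it gives $\cG(-\xi)_{|F}\cong\Oc_{\PP^1}^{\oplus r}$ and $\cG(-2\xi)_{|F}\cong\Oc_{\PP^1}(-1)^{\oplus r}$, so by Grauert's base-change theorem $\cK:=\varphi_{*}(\cG(-\xi))$ is locally free of rank $r$ with $R^{>0}\varphi_{*}(\cG(-\xi))=0$, while $R^{q}\varphi_{*}(\cG(-2\xi))=0$ for all $q$. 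Using $\xi_{|\widetilde{S}}=\varphi'^{*}c_1(\mathcal E)-E$, the projection formula, and the restriction sequence $0\to\cG(-2\xi)\to\cG(-\xi)\to i_{*}(\cG(-\xi)_{|\widetilde{S}})\to 0$ together with $R^{\bullet}\varphi_{*}\cG(-2\xi)=0$, I would identify
\[
\varphi_{*}\bigl(\cG\otimes i_{*}\Oc_{\widetilde{S}}(E)\bigr)=\varphi'_{*}\bigl(\cG_{|\widetilde{S}}(E)\bigr)=\cK\otimes\Oc_S(c_1(\mathcal E))=:\mathcal H.
\]
This simultaneously proves local freeness of rank $r$ and replaces the opaque sheaf of the statement by the cleaner object $\mathcal H$, in harmony with Theorem \ref{pullback}.

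It then remains to verify the Ulrich vanishing $H^{i}(S,\mathcal H(-j\,c_1(\mathcal E)))=0$ for $i=0,1,2$ and $j=1,2$ (Definition \ref{def:Ulrich} on the surface $S$). For $j=1$ this is immediate: $\mathcal H(-c_1(\mathcal E))=\cK$, and by Leray together with $R^{>0}\varphi_{*}\cG(-\xi)=0$ one has $H^{i}(S,\cK)=H^{i}(X,\cG(-\xi))=0$, which is the $j=1$ Ulrich vanishing of $\cG$ on $X$. For $j=2$ the projection formula and Leray give
\[
H^{i}(S,\mathcal H(-2c_1(\mathcal E)))=H^{i}(S,\cK(-c_1(\mathcal E)))=H^{i}\bigl(X,\cG(-\xi-\varphi^{*}c_1(\mathcal E))\bigr),
\]
so everything comes down to this last group on $X$.

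The real content is this $j=2$ vanishing, because the twist $-\xi-\varphi^{*}c_1(\mathcal E)$ is not itself an Ulrich twist. The device I would use is the tautological (Euler) sequence of the rank-$2$ scroll, $0\to\varphi^{*}(\det\mathcal E)(-\xi)\to\varphi^{*}\mathcal E\to\xi\to 0$. Tensoring it by $\cG(-2\xi-\varphi^{*}c_1(\mathcal E))$ produces
\[
0\to\cG(-3\xi)\to\varphi^{*}\mathcal E\otimes\cG(-2\xi-\varphi^{*}c_1(\mathcal E))\to\cG(-\xi-\varphi^{*}c_1(\mathcal E))\to 0.
\]
The subsheaf $\cG(-3\xi)$ has no cohomology by the $j=3$ Ulrich vanishing of $\cG$ on $X$; the middle term equals $\cG(-2\xi)\otimes\varphi^{*}\mathcal E^{\vee}$ (using $\mathcal E(-c_1)\cong\mathcal E^{\vee}$ in rank $2$), and its cohomology vanishes because $\varphi_{*}\bigl(\cG(-2\xi)\otimes\varphi^{*}\mathcal E^{\vee}\bigr)=R^{\bullet}\varphi_{*}(\cG(-2\xi))\otimes\mathcal E^{\vee}=0$. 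The long exact sequence then forces $H^{i}(X,\cG(-\xi-\varphi^{*}c_1(\mathcal E)))=0$, completing the $j=2$ case and hence the proof.

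The delicate point, and the step I expect to be the main obstacle, is that this argument needs $R^{\bullet}\varphi_{*}(\cG(-2\xi))=0$ as a statement about sheaves on $S$, not merely the vanishing of the groups $H^{\bullet}(X,\cG(-2\xi))$: the Leray spectral sequence alone only shows that the potentially nonzero torsion sheaves $\varphi_{*}\cG(-2\xi)$ and $R^{1}\varphi_{*}\cG(-2\xi)$ have constrained cohomology, not that they vanish. By Grauert's theorem the sheaf vanishing follows once the fibrewise splitting type is the balanced one $\Oc_{\PP^1}(1)^{\oplus r}$ on every fibre; ensuring that the hypothesis, stated for the general fibre, propagates to all fibres (equivalently, that the locus where the splitting type jumps is empty) is exactly where the structure of $\cG$ as an Ulrich bundle must be used rather than its mere semistability, and it is the crux of the argument.
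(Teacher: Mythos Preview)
The paper does not prove this statement itself; it is quoted from \cite[Theorem 6.1]{f-lc-pl}, and the only indication of method is Remark \ref{Rmk1}. That remark makes clear that the intended argument does \emph{not} proceed by establishing the uniform splitting $\cG_{|F}\cong\cO_{\PP^1}(1)^{\oplus r}$ on every fibre. Instead, one lets $U\subseteq S$ be the open dense locus where the generic splitting type holds, and then uses the freedom in the choice of the general hyperplane section $\widetilde{S}\in|\xi|$ to ensure that the zero--scheme $Z=V(\sigma)$ lies entirely inside $U$. With this choice, $\cG_{|E_i}\cong\cO_{\PP^1}(1)^{\oplus r}$ for every exceptional divisor $E_i$ of $\varphi'$, and the conclusion is obtained by invoking \cite[Thm.\,4.2]{cas-kim} on the blow--up $\widetilde{S}$.

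Your route is genuinely different, and the gap you flag in your last paragraph is exactly the point where it diverges from, and falls short of, the argument in \cite{f-lc-pl}. Your identification $\varphi_*\bigl(\cG\otimes i_*\cO_{\widetilde{S}}(E)\bigr)\cong\cK\otimes\cO_S(c_1(\cE))$ and the local freeness of $\cK=\varphi_*(\cG(-\xi))$ already require $h^0(\cG(-\xi)_{|F})$ to be constant along \emph{all} fibres, and your $j=2$ step requires $R^{\bullet}\varphi_*(\cG(-2\xi))=0$ as sheaves. Neither follows from the hypothesis on the general fibre together with the Ulrich vanishings: the sheaves $\varphi_*(\cG(-2\xi))$ and $R^1\varphi_*(\cG(-2\xi))$ are torsion, but on a surface $S$ their support could be a curve, and the global vanishings $H^i(X,\cG(-2\xi))=0$ do not force these torsion sheaves to be zero. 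You conjecture that Ulrichness forces the jump locus to be empty, but this is not what the cited proof does, and it is not clear it is even true in this generality. The manoeuvre you are missing is precisely the one in Remark \ref{Rmk1}: rather than controlling the jump locus, one moves $\widetilde{S}$ so that $Z$ avoids it, and then works on $\widetilde{S}$ via \cite{cas-kim} instead of globally on $X$.
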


In the following remark we comment on the hypotheses of Theorem \ref{pushforwardulrich}, in order to better explain the aforementioned Ulrich-bundle bijective correspondence arising from Theorems \ref{pullback} and \ref{pushforwardulrich} (cf. Proposition \ref{bijection} below).

\begin{rem*}\label{Rmk1}  We like to point out that the assumption on the splitting-type of the vector bundle $\cG$ on the general fiber $F$ of $\varphi$ as $\cG_{|F}\cong\cO_{\PP^1}(1)^{\oplus r}$ as in Theorem \ref{pushforwardulrich} implies that such a splitting-type  holds true for all $\varphi$-fibers $\varphi^{-1}(u) := F_u$, for $u$ varying in a suitable open dense subset  $U \subseteq S$. Thus, from the previous description on the birational structure of a general hyperplane section $\widetilde{S} = V(\widetilde{\sigma})$ of $X$ as in \eqref{composition}, the main points to let the Ulrich-bundle bijective correspondence arise are first of all that the zero-dimensional scheme $Z = V (\sigma)$, corresponding to $\widetilde{S} \in |\xi|$ general, is entirely contained in the open set $U \subseteq S$ (so that, for any $z \in Z$,  the restriction of $\mathcal G$ to $F_z := \varphi^{-1}(z)$ is $\cG_{|F_z}\cong\cO_{\PP^1}(1)^{\oplus r}$ namely, from \eqref{composition}, $\cG_{|E_i} \cong \cO_{\PP^1}(1)^{\oplus r}$, for any $1 \leq i \leq |c_2(\mathcal E)|$, where  $\sum_i\, E_i$ denotes the total exceptional divisor of the blow-up $\varphi'$ of $S$ along $Z$) and then the use of  \cite[Thm.\,4.2]{cas-kim}. 
\end{rem*}  
Arguments described in Remark \ref{Rmk1} are the principles  used in \cite{f-lc-pl} to get the following Proposition.

\begin{prop}\label{bijection}{\rm(\!\!\cite[Prop.\,6.2]{f-lc-pl})} Let $\varphi: X \rightarrow S$ be a $3$-fold scroll over a surface $S$, where  $(X,\xi) \cong (\PP(\mathcal E), \Oc_{\PP(\mathcal E)}(1))$ for some very ample rank-$2$ vector bundle $\mathcal E$ on $S$. Assume that $c_1(\mathcal E)$ is very ample on $S$. Then there exists a bijection :

$$
\left\{ \begin{array}{c}
          \text{Bundles $\cF$ of rank $r$ on $S$ } \\
          \text{which are Ulrich w.r.t. $c_1(\cE)$}
        \end{array}\right\}_{\biggm/ \cong_{iso}}\Leftrightarrow\left\{\begin{array}{c}
          \text{Bundles $\cG$ of rank $r$ on $X$} \\
          \text{which are Ulrich w.r.t. $\xi$ and such that} \\
           \text{ $\cG_{|\varphi^{-1}(s)}\cong\cO_{\PP^1}(1)^{\oplus r}$, for  general  $s\in S$}
        \end{array}\right\}_{\biggm/ \cong_{iso}}
$$
the bijection given by the maps
\begin{equation*}
  \phi:\cF\quad\mapsto\quad \cG:= \xi \otimes \varphi^*(\cF(-c_1(\cE)));
\end{equation*}
\noindent and
  \begin{equation*}
  \psi:\cG\quad\mapsto\quad \cF:=\varphi_*(\cG\otimes i_*(\cO_{\tilde{S}}(\sum^k_{i=1} E_i)).
\end{equation*}
\end{prop}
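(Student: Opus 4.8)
The plan is to verify that the two maps $\phi$ and $\psi$ are well defined and mutually inverse, so that together they realize the asserted bijection. First I would check that $\phi$ lands in the right-hand set. Given a rank-$r$ bundle $\mathcal F$ on $S$ that is Ulrich with respect to $c_1(\mathcal E)$, Theorem \ref{pullback} immediately gives that $\mathcal G := \xi \otimes \varphi^*(\mathcal F(-c_1(\mathcal E)))$ is Ulrich with respect to $\xi$. The fiber condition is then automatic: restricting to a fiber $F = \varphi^{-1}(s) \cong \PP^1$ one has $\xi|_F \cong \mathcal O_{\PP^1}(1)$, while $\varphi^*(\mathcal F(-c_1(\mathcal E)))|_F$ is trivial of rank $r$ because it is pulled back from $S$; hence $\mathcal G|_F \cong \mathcal O_{\PP^1}(1)^{\oplus r}$, so $\mathcal G$ lies in the right-hand set. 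Dually, Theorem \ref{pushforwardulrich} shows that $\psi$ lands in the left-hand set, since any $\mathcal G$ in the right-hand set satisfies exactly the splitting hypothesis required there, producing a rank-$r$ bundle $\psi(\mathcal G)$ on $S$ that is Ulrich with respect to $c_1(\mathcal E)$.

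The next step is the composition $\psi \circ \phi = \mathrm{id}$, which I expect to be a direct computation. The geometric input is the description of $\tilde S$ as the blow-up $\varphi' : \tilde S \to S$ at $Z = V(\sigma)$, whose exceptional divisors $E_i$ are the fibers $F_{z}$ over $Z$; from $\tilde S \in |\xi|$ and $\xi|_{F_z} \cong \mathcal O_{\PP^1}(1)$ together with $E_i^2 = -1$ one reads off the key relation
\[
\xi|_{\tilde S} \;\sim\; (\varphi')^* c_1(\mathcal E) - \sum_{i=1}^{k} E_i .
\]
Writing $\mathcal F' := \mathcal F(-c_1(\mathcal E))$ and using the projection formula $\mathcal G \otimes i_* \mathcal O_{\tilde S}(\sum E_i) = i_*\big(i^*\mathcal G \otimes \mathcal O_{\tilde S}(\sum E_i)\big)$, I would substitute $i^*\mathcal G = \xi|_{\tilde S}\otimes (\varphi')^*\mathcal F'$ and cancel the $\pm\sum E_i$ contributions against the relation above, obtaining $i^*\mathcal G \otimes \mathcal O_{\tilde S}(\sum E_i) \cong (\varphi')^*\mathcal F$. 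Since $\varphi \circ i = \varphi'$ and $\varphi'_* \mathcal O_{\tilde S} = \mathcal O_S$, the projection formula then yields $\psi(\phi(\mathcal F)) = \varphi'_* (\varphi')^*\mathcal F = \mathcal F$.

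The remaining and harder direction is $\phi \circ \psi = \mathrm{id}$, and here the main obstacle is structural: one must show that every $\mathcal G$ in the right-hand set is genuinely of the form $\xi \otimes \varphi^*\mathcal W$ for some rank-$r$ bundle $\mathcal W$ on $S$, that is, that $\mathcal G \otimes \xi^{-1}$ is a pullback from the base. The plan is to set $\mathcal W := \varphi_*(\mathcal G \otimes \xi^{-1})$ and prove that the natural evaluation map $\varphi^*\mathcal W \to \mathcal G \otimes \xi^{-1}$ is an isomorphism. By hypothesis $(\mathcal G\otimes\xi^{-1})|_F \cong \mathcal O_{\PP^1}^{\oplus r}$ for $s$ in a dense open $U \subseteq S$, and, as recalled in Remark \ref{Rmk1}, this splitting persists over all of $U$ while the scheme $Z$ may be taken inside $U$; cohomology and base change then make $\mathcal W$ locally free of rank $r$ with $\varphi^*\mathcal W \xrightarrow{\sim} \mathcal G\otimes\xi^{-1}$ over $U$. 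Controlling the fibers over $S \setminus U$, where the splitting type could a priori jump, is the delicate point, and I would handle it exactly as indicated in Remark \ref{Rmk1}, invoking \cite[Thm.\,4.2]{cas-kim} to upgrade the fiberwise statement to a global isomorphism. Granting this, $\mathcal G = \xi \otimes \varphi^*\mathcal W = \phi(\mathcal W(c_1(\mathcal E)))$, where $\mathcal W(c_1(\mathcal E))$ is Ulrich with respect to $c_1(\mathcal E)$ by Theorem \ref{pullback}; and the computation of the previous paragraph identifies it with $\psi(\mathcal G)$, giving $\phi(\psi(\mathcal G)) = \mathcal G$ and completing the proof.
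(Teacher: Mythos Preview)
The paper does not supply its own proof of this proposition: it is quoted verbatim from \cite[Prop.\,6.2]{f-lc-pl}, and the only argument the present paper offers is the informal sketch in Remark~\ref{Rmk1}, which records two ingredients --- that for a general $\tilde S\in|\xi|$ the blown-up locus $Z$ lies in the open set $U$ where the splitting $\cG_{|F}\cong\cO_{\PP^1}(1)^{\oplus r}$ holds, and the use of \cite[Thm.\,4.2]{cas-kim}. So there is no detailed in-paper proof to compare against.

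That said, your outline matches those hints well and is essentially correct. The well-definedness of $\phi$ via Theorem~\ref{pullback} plus the fiber check, and of $\psi$ via Theorem~\ref{pushforwardulrich}, are exactly right. Your computation of $\psi\circ\phi=\mathrm{id}$ is clean; the identity $\xi|_{\tilde S}\sim(\varphi')^*c_1(\cE)-\sum_iE_i$ follows, as you suggest, from adjunction (compare $K_{\tilde S}=(K_X+\xi)|_{\tilde S}$ with $K_{\tilde S}=(\varphi')^*K_S+\sum_iE_i$ using $K_X=-2\xi+\varphi^*(K_S+c_1(\cE))$), and then the projection formula finishes it. For $\phi\circ\psi=\mathrm{id}$ you have correctly isolated the only genuine difficulty, namely that $\cG\otimes\xi^{-1}$ is globally a pull-back and not merely trivial on the generic fiber; this is precisely where the paper points to \cite[Thm.\,4.2]{cas-kim}, so your plan to invoke it there is aligned with the intended argument. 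One small caution: the role of \cite[Thm.\,4.2]{cas-kim} in the original source is tied to the comparison through $\tilde S$ (and the requirement $Z\subset U$) rather than to a direct base-change statement for $\varphi_*(\cG\otimes\xi^{-1})$; when you write the details, make sure the way you use that theorem actually controls the special fibers, since semicontinuity alone does not exclude a jump of splitting type on $\PP^1$ with total degree zero.
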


\bigskip

Because we are interested on moduli spaces of Ulrich bundles on $3$-folds scrolls $X_e$ over $\FF_e$, as well as on moduli spaces of Ulrich bundles on  $\FF_e$,  we recall what was already proved in  \cite{fa-fl2}.

\bigskip
%
%
\noindent
\begin{theo}{\rm(\!\!\cite[{\bf Main Theorem}]{fa-fl2})} \label{Main Theorem}
 For any integer $e \geq 0$, consider the Hirzebruch surface 
$\mathbb{F}_e$ and let $\Oc_{\FF_e}(\alpha,\beta)$ denote the line bundle 
$\alpha C_e + \beta f$ on $\mathbb{F}_e$, where $C_e$ and $f$ are   generators  of ${\rm Num}(\mathbb{F}_e)$.

Let $(X_e, \xi)$ be a $3$-fold scroll over $\mathbb{F}_e$ as in Assumption \ref{ass:AB}, 
where $\varphi: X_e \to \FF_e$ denotes the scroll map. Then: 

\smallskip

\noindent 
(a) $X_e$  does  not support any Ulrich line bundle  w.r.t. $\xi$ unless $e = 0$. In this latter case, the unique Ulrich line bundles on 
$X_0$ are the following: 
\begin{itemize}
\item[(i)]  $L_1 :=\xi+\varphi^*\Oc_{\FF_0}(2,-1) $ and $ L_2 :=\xi+\varphi^*\Oc_{\FF_0}(-1,b_0-1)$;  
\item[(ii)]  for any integer $t\geq 1$, $M_1 :=2\xi+\varphi^*\Oc_{\FF_0}(-1,-t-1)$ and $M_2:=\varphi^*\Oc_{\FF_0}(2,3t-1)$, which only occur for $b_0=2t, k_0=3t$.
\end{itemize}

\smallskip 

\noindent
(b) Set $e=0$ and let $r \geq 2$ be any integer. Then the moduli space of rank-$r$ vector bundles $\cU_r$ on $X_0$ which are Ulrich w.r.t. $\xi$ and with first Chern class
\begin{eqnarray*}c_1(\cU_r) =
    \begin{cases}
      r \xi + \varphi^*\Oc_{\FF_0}(3, b_0-3) + \varphi^*\Oc_{\FF_0}\left(\frac{r-3}{2}, \frac{(r-3)}{2}(b_0-2)\right),  & \mbox{if $r$ is odd}, \\
      r \xi + \varphi^*\Oc_{\FF_0}(\frac{r}{2},\frac{r}{2}(b_0-2)), \color{black} & \mbox{if $r$ is even}.
    \end{cases}\end{eqnarray*}
    is not empty and it contains a generically smooth component $\mathcal M(r)$ of dimension 
		\begin{eqnarray*}\dim (\mathcal M(r) ) = \begin{cases} \frac{(r^2 -1)}{4}(6 b_0 -4), & \mbox{if $r$ is odd}, \\
			 \frac{r^2}{4} (6b_0-4) +1 , & \mbox{if $r$ is even}.
    \end{cases}
    \end{eqnarray*}
    The general point $[\cU_r] \in \mathcal M(r)$  
corresponds to a  slope-stable vector bundle, of slope w.r.t. $\xi$ given by 
$\mu(\cU_r) = 8b_0 - k_0 -3$. If moreover $r=2$, then $\cU_2$ is also {\em special} (cf. Def. \ref{def:special} above).

\smallskip

\noindent
(c) When $e >0$, let $r \geq 2$ be any integer. Then the moduli space of rank-$r$ vector bundles $\cU_{r}$ on $X_e$ which are Ulrich w.r.t. $\xi$ and with first Chern class
\begin{eqnarray*}c_1(\cU_r) =
    \begin{cases}
      r \xi + \varphi^*\Oc_{\FF_e}(3,b_e -3) + \varphi^*\Oc_{\FF_e}\left(\frac{r-3}{2}, \frac{(r-3)}{2}(b_e - e -2)\right), & \mbox{if $r$ is odd}, \\
      r \xi + \varphi^*\Oc_{\FF_e}\left(\frac{r}{2}, \frac{r}{2}(b_e-e-2)\right), & \mbox{if $r$ is even}.
    \end{cases}\end{eqnarray*}
    is not empty and it contains a generically smooth component $\mathcal M(r)$ of dimension 
		\begin{eqnarray*}\dim (\mathcal M(r) ) = \begin{cases} \left(\frac{(r -3)^2}{4}+ 2 \right)(6 b_e - 9e -4) + \frac{9}{2}(r-3) (2b_e-3e), & \mbox{if $r$ is odd}, \\
			 \frac{r^2}{4} (6b_e- 9e-4) +1 , & \mbox{if $r$ is even}.
    \end{cases}
    \end{eqnarray*} The general point $[\cU_{r}] \in \mathcal M(r)$  
corresponds to a  slope-stable vector bundle, of slope w.r.t. $\xi$ given by 
$\mu(\cU_r) = 8 b_e - k_e - 12 e - 3$.  If moreover $r=2$, then $\cU_2$ is also special. 
\end{theo}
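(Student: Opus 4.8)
The plan is to transfer the entire problem from the $3$-fold scroll $X_e$ to its base surface $\FF_e$, where all cohomological computations are two-dimensional and tractable, and then to produce the rank-$r$ bundles by iterated extensions of a few low-rank building blocks. The technical engine for the transfer is the bijective correspondence of Proposition \ref{bijection} (valid because $c_1(\Ee) \num 3C_e + b_e f$ is very ample on $\FF_e$): a rank-$r$ bundle $\cU_r = \xi \otimes \varphi^*(\cF)$ on $X_e$ is Ulrich w.r.t.\ $\xi$ if and only if $\cF(c_1(\Ee))$ is Ulrich w.r.t.\ $c_1(\Ee)$ on $\FF_e$, by Theorem \ref{pullback}. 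I would first record that, since $\varphi_* \Oc_{X_e} = \Oc_{\FF_e}$ and $R^{j}\varphi_* \Oc_{X_e} = 0$ for $j>0$ (the fibres being $\PP^1$), the projection formula yields isomorphisms $\Ext^i_{X_e}(\xi \otimes \varphi^*\cF,\, \xi \otimes \varphi^*\cG) \cong \Ext^i_{\FF_e}(\cF,\cG)$ for all $i$. Consequently simplicity, (semi)stability (recall that for Ulrich bundles stability coincides with slope-stability), infinitesimal deformations and obstructions may all be read off on $\FF_e$, and it suffices to carry out the construction there.

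Second, I would build the bundles on $\FF_e$. The class of bundles Ulrich w.r.t.\ a fixed polarization is closed under extensions, so the strategy is to fix suitable building blocks and take successive extensions. For even $r$ the block is a special rank-$2$ Ulrich bundle (its existence and the resolution it admits follow from Antonelli's sequence quoted in the Introduction), and the rank-$r=2m$ bundle is a general iterated self-extension of $m$ such blocks, with first Chern class adding up to the prescribed value; for odd $r$ one starts instead from a rank-$3$ Ulrich block, accounting for the extra $\varphi^*\Oc_{\FF_e}(3,b_e-3)$ summand, and attaches rank-$2$ blocks. That the extensions are again Ulrich is immediate from Definition \ref{def:Ulrich}, and the accounting of $c_1$ is a direct additivity check.

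Third comes the heart of the argument: showing that the general member is slope-stable and that the moduli point is generically smooth. For stability I would use that each block is stable of the same $\xi$-slope and that, between non-isomorphic stable blocks of equal slope, $\mathrm{Hom}=0$; ruling out destabilizing subsheaves (which, being subsheaves of an Ulrich bundle, are again Ulrich of the same slope) then reduces to comparing the dimension of the family of extensions with the dimensions of the loci admitting a fixed sub/quotient splitting, so a general extension is stable. For generic smoothness I would show $\Ext^2_{\FF_e}(\cF,\cF)=0$: by Serre duality on the surface this is $\mathrm{Hom}_{\FF_e}(\cF,\cF\otimes K_{\FF_e})^{\vee}$, which vanishes because $\cF$ is stable while $\mu(\cF\otimes K_{\FF_e})<\mu(\cF)$, since $K_{\FF_e}\cdot c_1(\Ee)=3e-2b_e-6<0$ (the Assumption \ref{(iii)} forces $b_e>3e$). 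Unobstructedness then gives that $\mathcal M(r)$ is generically smooth of dimension $\dim\Ext^1_{\FF_e}(\cF,\cF)=1-\chi_{\FF_e}(\cF,\cF)$, and Hirzebruch--Riemann--Roch on $\FF_e$ (using $\chi(\Oc_{\FF_e})=1$ and $\chi(\cF,\cF)=r^2-\bigl(2r\,c_2(\cF)-(r-1)c_1(\cF)^2\bigr)$) yields the stated dimension after substituting the Chern classes.

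Finally I would compute the $\xi$-slope and check speciality. Writing $X_e=\PP(\Ee)$ and using the Grothendieck relation $\xi^2=\varphi^*c_1(\Ee)\cdot\xi-\varphi^*c_2(\Ee)$ together with $C_e^2=-e$, $C_ef=1$, $f^2=0$, a direct intersection computation of $\mu(\cU_r)=(c_1(\cU_r)\cdot\xi^2)/r$ gives $\mu(\cU_r)=8b_e-k_e-12e-3$. For $r=2$, speciality means $c_1(\cU_2)=K_{X_e}+4\xi$; using the relative canonical formula $K_{X_e}=-2\xi+\varphi^*(K_{\FF_e}+c_1(\Ee))$ with $K_{\FF_e}\num -2C_e-(e+2)f$ this reduces to $c_1(\cU_2)=2\xi+\varphi^*\Oc_{\FF_e}(1,b_e-e-2)$, exactly the even-$r$ formula at $r=2$. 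The main obstacle is the third step: constructing the extensions and verifying the Ulrich property are routine, but proving that a \emph{general} extension is genuinely slope-stable rather than merely semistable, and is simultaneously unobstructed, requires the careful dimension bookkeeping sketched above, and this is where the bulk of the work lies.
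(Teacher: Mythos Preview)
The paper does not prove this statement here: it is quoted verbatim from \cite{fa-fl2} as a preliminary result, so there is no ``paper's own proof'' to compare against. From the hints scattered in the text, the original argument in \cite{fa-fl2} runs in the \emph{opposite} direction to yours: the bundles $\cU_r$ are built directly on $X_e$ as deformations of iterated extensions, and the $\FF_e$ statement (Theorem~\ref{thm:UlrichFe}) is then deduced as a corollary via Proposition~\ref{bijection}. Your proposal reverses this, carrying out all constructions and cohomological bookkeeping on $\FF_e$ and pulling back. Both routes are legitimate, and the essential ingredients---iterated extensions of low-rank Ulrich blocks, stability of the general extension by a dimension count ruling out semistable splittings, unobstructedness via $\Ext^2=0$ and Serre duality---match what one can infer about the original.

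There is, however, a genuine omission: your outline treats only parts (b) and (c) and says nothing about part (a), the classification of Ulrich \emph{line} bundles on $X_e$. This is not a formality. One must write a general line bundle as $a\xi + \varphi^*\Oc_{\FF_e}(\alpha,\beta)$, impose the nine vanishing conditions $H^i(X_e, L(-j\xi))=0$ for $0\le i\le 3$, $1\le j\le 3$, push them down via Leray (which requires knowing $\varphi_*\Oc_{X_e}(a\xi)$ and $R^1\varphi_*\Oc_{X_e}(a\xi)$ for the relevant $a$, hence the structure of $\Ee$ from Assumption~\ref{ass:AB}), and then solve the resulting Diophantine system in $(a,\alpha,\beta)$. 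That analysis is what produces the dichotomy $e=0$ versus $e>0$, the exact list $L_1,L_2,M_1,M_2$, and the numerical constraint $b_0=2t$, $k_0=3t$ for the $M_i$; none of this follows from the extension machinery you set up for $r\ge 2$.
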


\bigskip

 We want to stress that in \cite[Proof of Thm.\,5.1]{fa-fl2} it has been proved that bundles $L_1, L_2$ and $\mathcal U_r$, for any $r \geq 2$, as in Theorem \ref{Main Theorem} split on any $\varphi$-fiber of $X_e$ as requested in Theorem \ref{pushforwardulrich} and in Proposition \ref{bijection}, namely for any $\varphi$-fiber $F$, one has 
$(L_1)_{|F} = (L_2)_{|F} \cong \mathcal O_{\mathbb P^1}(1)$ whereas $(\mathcal U_r)_{|F} \cong \mathcal O_{\mathbb P^1}(1)^{\oplus r}$ (this is due to the iterative contructions in \cite{fa-fl2} of such bundles as deformations of iterative extensions).  As a direct consequence of Theorem \ref{Main Theorem},  Theorem \ref{pullback} and the one--to--one correspondence in Proposition \ref{bijection},  in \cite{fa-fl2} we could prove  the following result concerning moduli spaces of rank-$r$ vector bundles on Hirzebruch surfaces $\FF_e$, for any $r \geq 1$ and any $e \geq  0$, which are Ulrich w.r.t. the very ample line bundle $c_1(\mathcal E_e) = 3 C_e + b_ef$, with $b_e \geq  3e+2$ as it follows from  Assumption \ref{ass:AB} (the case $r=1,2,3$ 
already known by \cite{a-c-mr,cas,ant}). 

\begin{theo}\label{thm:UlrichFe} {\rm(\!\!\cite[Theorem 5.1]{fa-fl2})} For any integer $e \geq  0$, consider the Hirzebruch surface 
$\mathbb{F}_e$ and let $\Oc_{\FF_e}(\alpha,\beta)$ denote the line bundle 
$\alpha C_e + \beta f$ on $\mathbb{F}_e$, where $C_e$ and $f$ are  generators  of ${\rm Num}(\mathbb{F}_e)$.

Consider the very ample polarization $c_1(\E_e)= \Oc_{\FF_e}(3,b_e)$ on $\mathbb{F}_e$, where $b_e \geq  3e+2$. Then: 

\smallskip

\noindent 
(a) $\FF_e$  does  not support any Ulrich line bundle  w.r.t. $c_1(\E_e)$ unless $e = 0$. In this latter case, the unique line bundles on 
$\FF_0$ which are Ulrich w.r.t. $c_1(\E_e)$ are 
$$\mathcal L_1 :=\Oc_{\FF_0}(5,b_0-1) \; {\rm and} \; \mathcal L_2 := \Oc_{\FF_0}(2,2b_0-1).$$

\smallskip 

\noindent
(b) Set $e=0$ and let $r \geq 2$ be any integer. Then the moduli space $\mathcal M_{\FF_0}(r)$ of rank-$r$ vector bundles $\cH_r$ on $\FF_0$ which are Ulrich w.r.t. $c_1(\E_0)$ and with first Chern class
\begin{eqnarray*}c_1(\cH_r) =
    \begin{cases}
      \Oc_{\FF_0}(3(r+1), (r+1) b_0 -3) \otimes  \Oc_{\FF_0}\left(\frac{r-3}{2}, \frac{(r-3)}{2}(b_0-2)\right), & \mbox{if $r$ is odd}, \\
      \Oc_{\FF_0}(3r, r b_0) \otimes \Oc_{\FF_0}\left(\frac{r}{2}, \frac{r}{2}(b_0-2)\right), & \mbox{if $r$ is even}.
    \end{cases}\end{eqnarray*}
    is not empty and it contains a generically smooth component  of dimension 
		\begin{eqnarray*} 
		\begin{cases} \frac{(r^2 -1)}{4}(6 b_0 -4), & \mbox{if $r$ is odd}, \\
			 \frac{r^2}{4} (6b_0-4) +1 , & \mbox{if $r$ is even}.
    \end{cases}
    \end{eqnarray*}
    The general point $[\cH_r] $ of such a component  corresponds to a  slope-stable vector bundle.

\smallskip

\noindent
(c) When $e >0$, let $r \geq  2$ be any integer. Then the moduli space  $\mathcal M_{\FF_e}(r)$ of rank-$r$ vector bundles $\cH_{r}$ on $\FF_e$ which are Ulrich w.r.t. $c_1(\E_e)$ and with first Chern class
\begin{eqnarray*}c_1(\cH_r) =
    \begin{cases}
     \Oc_{\FF_e}(3 (r+1), (r+1)b_e-3) \otimes \Oc_{\FF_e}\left(\frac{r-3}{2}, \frac{(r-3)}{2}(b_e - e -2)\right), & \mbox{if $r$ is odd}, \\
      \Oc_{\FF_e}(3 r, r b_e) \otimes  \Oc_{\FF_e}\left(\frac{r}{2}, \frac{r}{2}(b_e-e-2)\right), & \mbox{if $r$ is even}.
    \end{cases}
    \end{eqnarray*}
    is not empty and it contains a generically smooth component  of dimension 
		\begin{eqnarray*}
		 \begin{cases} \left(\frac{(r -3)^2}{4}+ 2 \right)(6 b_e - 9e -4) + \frac{9}{2}(r-3) (2b_e-3e), & \mbox{if $r$ is odd}, \\
			 \frac{r^2}{4} (6b_e- 9e-4) +1 , & \mbox{if $r$ is even}.
    \end{cases}
    \end{eqnarray*} 
    The general point $[\cH_{r}]$ of such a component corresponds to a  slope-stable vector bundle. 
\end{theo}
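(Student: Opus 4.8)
The plan is to deduce the whole statement from Theorem \ref{Main Theorem} by transporting it through the Ulrich-bundle correspondence of Theorem \ref{pullback} and Proposition \ref{bijection}. By Theorem \ref{Main Theorem}-(c) (resp. -(b) for $e=0$) the moduli space $\mathcal M(r)$ of rank-$r$ bundles on $X_e$ that are Ulrich with respect to $\xi$, with the prescribed $c_1(\cU_r)$, is non-empty, contains a generically smooth component of the asserted dimension, and has a slope-stable general member; moreover, by the remark recalled after Theorem \ref{Main Theorem}, the general such $\cU_r$ restricts to $\Oc_{\PP^1}(1)^{\oplus r}$ on every $\varphi$-fiber, so the hypotheses of Proposition \ref{bijection} are met. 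I would then invoke the explicit mutually inverse maps $\phi,\psi$ of Proposition \ref{bijection}: the bundle $\cH_r:=\psi(\cU_r)=\varphi_*\big(\cU_r\otimes i_*\Oc_{\widetilde{S}}(\sum_i E_i)\big)$ is a rank-$r$ bundle on $\FF_e$ which is Ulrich with respect to $c_1(\cE_e)$, with inverse $\cU_r=\phi(\cH_r)=\xi\otimes\varphi^*\big(\cH_r(-c_1(\cE_e))\big)$.

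First I would pin down the first Chern class. Writing $\cF:=\cH_r(-c_1(\cE_e))$, so that $\cU_r=\xi\otimes\varphi^*\cF$, the equality $c_1(\cU_r)=r\,\xi+\varphi^*c_1(\cF)$ of Theorem \ref{Main Theorem} determines $c_1(\cF)$, whence
\begin{equation*}
c_1(\cH_r)=c_1(\cF)+r\,c_1(\cE_e),\qquad c_1(\cE_e)=\Oc_{\FF_e}(3,b_e).
\end{equation*}
Substituting and separating the parity of $r$ reproduces verbatim the two expressions for $c_1(\cH_r)$ in the statement, so in particular $\mathcal M_{\FF_e}(r)\neq\varnothing$. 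The rank-one assertion (a) is the $r=1$ instance of the same correspondence: only $L_1,L_2$ (and not $M_1,M_2$, which fail $\cG_{|F}\cong\Oc_{\PP^1}(1)$) lie in the image of $\phi$, and the identical computation gives $\psi(L_1)=\mathcal L_1$, $\psi(L_2)=\mathcal L_2$, while for $e>0$ the absence of Ulrich line bundles on $X_e$ forces their absence on $\FF_e$.

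For generic smoothness and the dimension count I would compare deformation theories. Since $\cU_r$ differs from $\varphi^*\cH_r$ only by a twist by line bundles, $\cHom(\cU_r,\cU_r)\cong\varphi^*\cHom(\cH_r,\cH_r)$; as $\varphi\colon X_e\to\FF_e$ is a $\PP^1$-bundle one has $R\varphi_*\Oc_{X_e}=\Oc_{\FF_e}$, so the projection formula gives
\begin{equation*}
\Ext^i_{X_e}(\cU_r,\cU_r)\;\cong\;\Ext^i_{\FF_e}(\cH_r,\cH_r)\qquad(i\ge 0).
\end{equation*}
Hence the Zariski tangent space $(i=1)$ and the obstruction space $(i=2)$ agree on the two sides, so the generic smoothness of $\mathcal M(r)$ and its dimension pass unchanged to $\mathcal M_{\FF_e}(r)$. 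Slope-stability of the general member transfers as well: for any rank-$s$ subsheaf the normalized slopes obey $\mu_\xi(\phi(\cV))=\big(\xi^3-c_1(\cE_e)^2\big)+\mu_{c_1(\cE_e)}(\cV)$, and since the additive constant does not depend on $\cV$, a destabilizing subsheaf of $\cH_r$ would produce, through $\phi$, a destabilizing subsheaf of $\cU_r$, contradicting Theorem \ref{Main Theorem}.

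The step I expect to be the main obstacle is upgrading the set-theoretic bijection of Proposition \ref{bijection} to an isomorphism respecting the scheme structure: one must verify that $\phi$ and $\psi$ are mutually inverse in flat families — so that reducedness, smoothness and dimension are genuinely inherited, and not merely the pointwise tangent dimension — and that the fiberwise splitting type $\Oc_{\PP^1}(1)^{\oplus r}$ is an open condition preserved in families. This is exactly the point where $R^1\varphi_*\Oc_{X_e}=0$ together with the base-change and semicontinuity input behind Theorem \ref{pushforwardulrich} (through \cite[Thm.\,4.2]{cas-kim}) must be used with care. Once this is secured, the isomorphism $\Ext^i_{X_e}\cong\Ext^i_{\FF_e}$ renders the transfer of every asserted property automatic.
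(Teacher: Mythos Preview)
Your proposal is correct and follows exactly the route the paper indicates: Theorem~\ref{thm:UlrichFe} is quoted from \cite{fa-fl2} without proof, and the paper explicitly says it is ``a direct consequence of Theorem~\ref{Main Theorem}, Theorem~\ref{pullback} and the one--to--one correspondence in Proposition~\ref{bijection}''---precisely the transport mechanism you implement. Your elaboration (the $c_1$ computation, the $\Ext^i$ comparison via $R\varphi_*\Oc_{X_e}=\Oc_{\FF_e}$ and projection formula, and the affine relation between slopes) fills in the details of that transfer correctly and faithfully.
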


%
%
\section{Moduli  spaces}\label{section3}

Our aim in this section is to prove that the moduli space  $\mathcal M_{\FF_e}(r)$  of Ulrich bundles on $\FF_e$, $e \geq 0$, as  in  Theorem \ref{thm:UlrichFe}  is  irreducible, generically smooth and unirational, whereas that the generically smooth modular  component  $\mathcal M(r)$  of Ulrich bundles on $X_e$, $e \geq 0$, as in Theorem \ref{Main Theorem} is unirational.

%
%
\begin{theo}\label{thm:moduli_UlrichFe} Let $\mathcal M_{\FF_e}(r)$ be  the moduli space of rank-$r$ vector bundles $\cH_{r}$ on $\FF_e$ which are Ulrich w.r.t. $c_1(\E_e)= \Oc_{\FF_e}(3, b_e)$ and  with first Chern class
\begin{eqnarray*}c_1(\cH_r) =
    \begin{cases}
     \Oc_{\FF_e}(3 (r+1), (r+1)b_e-3) \otimes \Oc_{\FF_e}\left(\frac{r-3}{2}, \frac{(r-3)}{2}(b_e - e -2)\right), & \mbox{if $r$ is odd}, \\
      \Oc_{\FF_e}(3 r, r b_e) \otimes  \Oc_{\FF_e}\left(\frac{r}{2}, \frac{r}{2}(b_e-e-2)\right), & \mbox{if $r$ is even},
    \end{cases}\end{eqnarray*}
      (see  Theorem \ref{thm:UlrichFe}).  Then $\mathcal M_{\FF_e}(r) $ is generically  smooth, irreducible,   unirational and of dimension 
    \begin{eqnarray*}\dim (\mathcal M_{\FF_e}(r) ) = \begin{cases} \left(\frac{(r -3)^2}{4}+ 2 \right)(6 b_e - 9e -4) + \frac{9}{2}(r-3) (2b_e-3e), & \mbox{if $r$ is odd}, \\
			 \frac{r^2}{4} (6b_e- 9e-4) +1 , & \mbox{if $r$ is even}.
    \end{cases}
    \end{eqnarray*}
\end{theo}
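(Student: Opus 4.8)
The plan is to exploit Antonelli's resolution \cite[Theorem 1.2]{ant} to realize \emph{every} bundle parametrized by $\mathcal M_{\FF_e}(r)$ as the cokernel of a morphism between two fixed direct sums of line bundles, and thereby to present the whole moduli space as the image of (an open subset of) an affine space. First I would specialize Antonelli's theorem to our setting, taking $a=3$ and $b=b_e$, since the polarization is $c_1(\E_e)=\Oc_{\FF_e}(3,b_e)$, and with $\Oc_{\FF_e}(\alpha,\beta)=c_1(\cH_r)$ as prescribed in the statement. This yields that each such $\cH_r$ fits into a short exact sequence
\begin{equation*}
0 \to \Oc_{\FF_e}(2,b_e-e-1)^{\oplus \gamma} \xrightarrow{\ \phi\ } \Oc_{\FF_e}(2,b_e-e)^{\oplus \delta} \oplus \Oc_{\FF_e}(3,b_e-1)^{\oplus \tau} \to \cH_r \to 0,
\end{equation*}
where $\gamma,\delta,\tau$ are the nonnegative integers determined by $r,\alpha,\beta$ via \eqref{eq:coker}; I would record their explicit values and check nonnegativity under Assumption \ref{ass:AB}.

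Next I would introduce the finite-dimensional vector space
\[
\mathbb V := \mathrm{Hom}\!\left(\Oc_{\FF_e}(2,b_e-e-1)^{\oplus \gamma},\ \Oc_{\FF_e}(2,b_e-e)^{\oplus \delta} \oplus \Oc_{\FF_e}(3,b_e-1)^{\oplus \tau}\right),
\]
whose points are exactly the matrices $\phi$ above; concretely $\mathbb V$ is assembled from copies of $H^0(\Oc_{\FF_e}(0,1))$ and $H^0(\Oc_{\FF_e}(1,e))$, so it is an affine space, in particular irreducible and rational. Inside $\mathbb V$ I would let $W$ be the locus of those $\phi$ that are injective as sheaf maps with locally free cokernel which is moreover Ulrich. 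As an intersection of open conditions, $W$ is open in $\mathbb V$, hence irreducible and rational, and it is nonempty because Theorem \ref{thm:UlrichFe} produces at least one Ulrich bundle, which by Antonelli admits such a presentation. Since the presenting data $\gamma,\delta,\tau$ are fixed, every cokernel has the same Chern character, so all these bundles (being Ulrich, hence semistable) define points of the single moduli space $\mathcal M_{\FF_e}(r)$.

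The universal cokernel over $W\times\FF_e$ is then a flat family of Ulrich bundles with constant invariants, and hence induces a classifying morphism $\Phi\colon W \to \mathcal M_{\FF_e}(r)$ by the corepresentability of the coarse moduli space. By Antonelli's theorem every Ulrich bundle with these invariants \emph{is} such a cokernel, so $\Phi$ is surjective. Consequently $\mathcal M_{\FF_e}(r)=\Phi(W)$ is the image of the irreducible variety $W$, hence \emph{irreducible}, and it is dominated by the rational variety $W$, hence \emph{unirational}. Finally, combining irreducibility with Theorem \ref{thm:UlrichFe}, which already exhibits a generically smooth component of the asserted dimension with slope-stable general member, forces that component to be all of $\mathcal M_{\FF_e}(r)$; therefore $\mathcal M_{\FF_e}(r)$ is generically smooth of the stated dimension, completing the proof.

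I expect the main obstacle to lie in the step producing $\Phi$: namely confirming that $W$ is genuinely nonempty and that the universal cokernel is flat over it, so that $\Phi$ is an honest morphism rather than a mere rational map, together with the verification that the cokernels are locally free and Ulrich. The surjectivity of $\Phi$, which is the crucial input for irreducibility, is precisely the content of Antonelli's resolution and so comes essentially for free, while the remaining bookkeeping (the explicit determination of $\gamma,\delta,\tau$ and of $\dim\mathbb V$, and the Chern-class consistency) is routine.
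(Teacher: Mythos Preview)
Your proposal is correct and follows essentially the same route as the paper: both arguments use Antonelli's resolution to present every bundle in $\mathcal M_{\FF_e}(r)$ as a cokernel of a map in the fixed irreducible affine space $\mathrm{Hom}(\sA,\sB)$, deduce irreducibility and unirationality from this, and then import generic smoothness and the dimension count from Theorem~\ref{thm:UlrichFe}. The paper phrases the irreducibility step as ``any component has general point coming from $\mathrm{Hom}(\sA,\sB)$, hence all components coincide'' rather than your ``$\Phi\colon W\to \mathcal M_{\FF_e}(r)$ is surjective'', and it disposes of your anticipated obstacle (nonemptiness of $W$ and local freeness/Ulrichness of the general cokernel) by invoking \cite[Theorem~1.3]{ant} directly, but these are presentational variants of the same argument.
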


\begin{proof}

From Theorem \ref{thm:UlrichFe} we know  that the moduli  space   $\mathcal M_{\FF_e}(r)$ is not empty.  

Let $\mathcal M\subseteq  \mathcal M_{\FF_e}(r)$ be any   irreducible component  and let   $[\cH_r ]\in  \mathcal M$ be  its general point. So  $\cH_r$  is  of rank $r$ and as in  the statement of Theorem \ref{thm:moduli_UlrichFe}. 

For simplicity let $ c_1(\cH_r) =   \Oc_{\FF_e}(\alpha, \beta)$.
    By \cite[Theorem 1.1]{ant} $\cH_r $  necessarily  fits into the following short exact sequence  
\begin{eqnarray}
\label{eq:coker}
\xymatrix{ 0 \to \mathcal O_{\FF_e}(2, b_e - e -1)^{\oplus \gamma}\ar[r]^-{\phi} &   \mathcal O_{\FF_e}(2, b_e - e)^{\oplus \delta} \oplus \mathcal O_{\FF_e}(3, b_e-1)^{\oplus \tau} \to\cH_r  \to 0.}                                  
 \end{eqnarray}
 where  $\gamma=\alpha+\beta-r(2+b_e)-e(\alpha-3r)$,\,  $\delta=\beta-r(b_e-1)-e(\alpha-3r)$,    $\tau=\alpha-2r$ which, after plugging in the value of $\alpha$ and $\beta$, become
\bigskip

$\gamma=\frac{(b_e-2e+1)r-b_e+3}{2}$,  $\delta= \frac{(r-1)b_e}{2}-er$, $\tau=\frac{3(r+1)}{2}$, \, if $r$ is odd, \, and 
 
 $\gamma= \frac{(b_e-2e+1)r}{2}$,  $\delta= \frac{(b_e-2e)r}{2}$, $\tau=\frac{3r}{2}$, if $r$ is even.
 \medskip
 
   Thus   $\cH_r $ is expressed as the cokernel of  an injective map $\phi \in \rm{Hom}_{\FF_e} ( \sA, \sB)$, where
  \linebreak $\sA := \mathcal O_{\FF_e}(2, b_e - e -1)^{\oplus \gamma}$ and $\sB :=  \mathcal O_{\FF_e}(2, b_e - e)^{\oplus \delta} \oplus \mathcal O_{\FF_e}(3, b_e-1)^{\oplus \tau}$, with $\gamma, \delta, \tau$  as above. 
   
   On  the other hand, by  \cite[Theorem 1.3]{ant}, if we  take a general map  $\phi_{gen} \in \rm{Hom}_{\FF_e} ( \sA, \sB)$ then ${\rm coker}(\phi_{gen})$ is   a rank-$r$ vector bundle on $\FF_e$, in particular locally free, which is 
   Ulrich w.r.t. $c_1(\mathcal E_e)$, and with Chern classes 
    $c_1({\rm coker}(\phi_{gen}))$ and $c_2({\rm coker}(\phi_{gen}))$ as those of $\cH_r $. Since $ \sA, \sB$ are uniquely  determined  by $r$, $e$, $(3,b_e)$ and $ c_1(\cH_r)$  and since $\rm{Hom}_{\FF_e} ( \sA, \sB)$ is irreducible, it follows  that   $\mathcal M=\mathcal M_{\FF_e}(r)$, i.e. $\mathcal M_{\FF_e}(r)$ is therefore irreducible and moreover it is unirational, being dominated by  $\rm{Hom}_{\FF_e} ( \sA, \sB)$. 
   
 The generic smoothness of  $\mathcal M_{\FF_e}(r)$ and the formula for its dimension follow as they have already been proved in  Theorem \ref{thm:UlrichFe}-(b), (c). 
   \end{proof}
%
%
   \begin{theo}\label{Main Theorem1}
 For any integer $e \geq 0$, let  $\mathbb{F}_e$ be the Hirzebruch surface 
 and let $\Oc_{\FF_e}(\alpha,\beta)$ denote the line bundle 
$\alpha C_e + \beta f$ on $\mathbb{F}_e$, where $C_e$ and $f$ are  generators of ${\rm Num}(\mathbb{F}_e)$.

Let $(X_e, \xi)$ be a $3$-fold scroll over $\mathbb{F}_e$ as in Assumption \ref{ass:AB},  where $\varphi: X_e \to \FF_e$ denotes the scroll map. Then the moduli space of rank-$r\geq 2$ vector bundles $\cU_r$ on $X_e$ which are Ulrich w.r.t. $\xi$ and with first Chern class as in Theorem \ref{Main Theorem}
    is not empty and it contains a generically smooth component $\mathcal M(r)$ which is unirational  and  of dimension
    \begin{eqnarray*}\dim (\mathcal M(r) ) = \begin{cases} \left(\frac{(r -3)^2}{4}+ 2 \right)(6 b_e - 9e -4) + \frac{9}{2}(r-3) (2b_e-3e), & \mbox{if $r$ is odd}, \\
			 \frac{r^2}{4} (6b_e- 9e-4) +1 , & \mbox{if $r$ is even}.
    \end{cases}
    \end{eqnarray*}  \end{theo}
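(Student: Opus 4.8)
The plan is to transfer the properties already established for the moduli space $\mathcal M_{\FF_e}(r)$ on the base surface (Theorem \ref{thm:moduli_UlrichFe}) to the modular component $\mathcal M(r)$ on the scroll $X_e$ via the bijective correspondence of Proposition \ref{bijection}. Generic smoothness and the dimension formula are already contained in Theorem \ref{Main Theorem}-(b),(c), so the only genuinely new assertion to prove is \emph{unirationality} of $\mathcal M(r)$. The key observation is that the two moduli spaces are not merely abstractly in bijection but are linked by the explicit maps $\phi:\cF \mapsto \xi\otimes\varphi^*(\cF(-c_1(\cE_e)))$ and $\psi$ of Proposition \ref{bijection}, and that the general member $[\cU_r]\in\mathcal M(r)$ splits as $\cO_{\PP^1}(1)^{\oplus r}$ on every $\varphi$-fiber (as recalled after Theorem \ref{Main Theorem}), so it lies in the relevant side of the correspondence.

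First I would set $\cH_r$ to be the rank-$r$ Ulrich bundle on $\FF_e$ corresponding to the general $\cU_r$ under $\psi$, and note that $\cH_r$ has precisely the first Chern class appearing in Theorem \ref{thm:moduli_UlrichFe}; concretely $\cU_r = \xi\otimes\varphi^*(\cF)$ with $\cF = \cH_r(-c_1(\cE_e))$, exactly the form covered by Theorem \ref{pullback} and Proposition \ref{bijection}. Thus the assignment $\cF\mapsto\cU_r$ identifies $\mathcal M(r)$ (or at least a dense open subset of it, consisting of bundles with the correct fiber-splitting type) with the corresponding modular component inside $\mathcal M_{\FF_e}(r)$, and this identification is an isomorphism of the parameter spaces since $\phi$ and $\psi$ are mutually inverse and algebraic in families.

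The core of the argument is then the unirationality of $\mathcal M_{\FF_e}(r)$ proved in Theorem \ref{thm:moduli_UlrichFe}: that moduli space is dominated by the irreducible affine space $\mathrm{Hom}_{\FF_e}(\sA,\sB)$ via the cokernel construction $\phi_{gen}\mapsto \mathrm{coker}(\phi_{gen})$ coming from Antonelli's presentation \eqref{eq:coker}. Composing this dominant rational map $\mathrm{Hom}_{\FF_e}(\sA,\sB)\dashrightarrow\mathcal M_{\FF_e}(r)$ with the isomorphism $\mathcal M_{\FF_e}(r)\cong\mathcal M(r)$ induced by $\phi$ exhibits $\mathcal M(r)$ as dominated by a rational variety, hence unirational. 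Finally I would cite Theorem \ref{Main Theorem} verbatim for generic smoothness and for the two cases of the dimension formula, so that nothing needs to be recomputed.

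The main obstacle I anticipate is the functoriality/continuity of the correspondence in families: to pass from a bijection on isomorphism classes to a genuine isomorphism (or at least a dominant morphism) of moduli spaces, one must check that $\phi$ and $\psi$ extend to a flat family over the base, and that the fiber-splitting condition $\cG_{|F}\cong\cO_{\PP^1}(1)^{\oplus r}$ defining the admissible locus in Proposition \ref{bijection} is open and holds generically on $\mathcal M(r)$. The latter is exactly what is guaranteed by the remark following Theorem \ref{Main Theorem}, namely that the iterative extension construction in \cite{fa-fl2} forces $(\cU_r)_{|F}\cong\cO_{\PP^1}(1)^{\oplus r}$ for the general point, so the admissible locus is dense; granting this, the transfer of unirationality is immediate and no further delicate deformation-theoretic input is required.
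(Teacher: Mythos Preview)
Your approach is correct and reaches the same conclusion, but it is organised differently from the paper's proof. You transfer unirationality from $\mathcal M_{\FF_e}(r)$ to $\mathcal M(r)$ by arguing that the map $\phi:\cF\mapsto\xi\otimes\varphi^*(\cF(-c_1(\cE_e)))$ of Proposition~\ref{bijection} is functorial in flat families and therefore induces a dominant morphism of moduli spaces, so that the composite $\mathrm{Hom}_{\FF_e}(\sA,\sB)\dashrightarrow\mathcal M_{\FF_e}(r)\to\mathcal M(r)$ exhibits the desired rational domination. The paper instead pulls back Antonelli's resolution \eqref{eq:coker} to $X_e$, obtaining an exact sequence $0\to\overline{\sA}\to\overline{\sB}\to\cU_r\to 0$ with $\overline{\sA}=\xi\otimes\varphi^*(\sA(-c_1(\cE_e)))$ and similarly for $\overline{\sB}$, and then works directly with $\mathrm{Hom}_{X_e}(\overline{\sA},\overline{\sB})$: B\u{a}nic\u{a}'s theorem gives injectivity of the general map, openness of local freeness and semicontinuity of cohomology give that the general cokernel is locally free and Ulrich, and since the specific $\overline{\phi}$ presenting the general $\cU_r$ lies in this Hom space, the latter dominates $\mathcal M(r)$. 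Your route is more conceptual and reuses Theorem~\ref{thm:moduli_UlrichFe} wholesale, at the cost of having to justify (as you correctly flag) that $\phi$ behaves well in families; the paper's route is more self-contained on $X_e$ and yields an explicit affine parametrisation there, at the cost of re-verifying the Ulrich property via semicontinuity. Note incidentally that the two parameter spaces coincide, since $\overline{\sA}^\vee\otimes\overline{\sB}=\varphi^*(\sA^\vee\otimes\sB)$ and $\varphi_*\cO_{X_e}=\cO_{\FF_e}$, so $\mathrm{Hom}_{X_e}(\overline{\sA},\overline{\sB})\cong\mathrm{Hom}_{\FF_e}(\sA,\sB)$; in this sense the two arguments are closer than they first appear.
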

   
  \begin{proof} 
As we  have seen in  the proof of Theorem \ref{thm:moduli_UlrichFe}, a general $[\cH_r]  \in \mathcal M_{\FF_e}(r)$, turns out to  be $\cH_r ={\rm coker}(\phi)$, with  $\phi$ a general vector bundle morphism as in  \eqref{eq:coker}.

 Now take $\sA = \mathcal O_{\FF_e}(2, b_e - e -1)^{\oplus \gamma}$, $\sB =  \mathcal O_{\FF_e}(2, b_e - e)^{\oplus \delta} \oplus \mathcal O_{\FF_e}(3, b_e-1)^{\oplus \tau}$, $\gamma, \delta$ and $\tau$ as in the proof of Theorem \ref{thm:moduli_UlrichFe}; then for $\phi \in \rm{Hom}_{\FF_e} ( \sA, \sB)$ general, one has therefore 
$$ 0 \to \sA \stackrel{\phi}{\longrightarrow}   \sB \to \cH_r  \to 0.$$
\color{black} We first  tensor this exact sequence by $-c_1(\E_e)$,  then we pull it back via $\varphi^*$, where $\varphi: X_e \to \FF_e$ is the scroll map,  and the sequence remains exact on the left since $\cH_r(- c_1(\mathcal E_e))$ is locally free; \color{black} subsequently we tensor the  resulting short exact sequence with  $\xi$,  the tautological polarization on $X_e$,  and thus we get the exact sequence
\begin{eqnarray}\label{eq:coker-pullback}
\xymatrix{ 
0 \to \varphi^*\big(\sA (- c_1(\E_e))\big)\ar[r]^-{\overline{\phi}} \otimes \xi& \varphi^* \big(\sB(- c_1(\E_e))\big)\otimes \xi \to\varphi^*\big(\cH_r (-c_1)\big)\otimes \xi \to 0,}                                  
 \end{eqnarray} defining $\overline{\phi}$. Set  $\overline{\sA} := \varphi^*\big(\sA (- c_1(\E_e)\big)\otimes \xi$  and  $\overline{\sB} :=  \varphi^* \big(\sB(- c_1(\E_e)\big)\otimes \xi$.  Recall that the 
 modular component $\mathcal M(r)$ as in Theorem \ref{Main Theorem} has an open dense subset parametrizing isomorphism classes of slope-stable, rank-$r$ vector bundles $\mathcal U_r$, which are Ulrich w.r.t. the tautological polarization $\xi$ of $X_e$ 
 and with Chern classes determined by the iterative constructions as in \cite{fa-fl2} 
 (in particular, the first Chern class $c_1$ is as reminded in   Theorem \ref{Main Theorem}); for $[\mathcal U_r] \in \mathcal M(r)$ general it has also been proved in \cite[Proof of Thm.\,5.1]{fa-fl2} that the bundle $\mathcal U_r$ has in particular the splitting type requested by Proposition \ref{bijection}, namely $(\cU_r)_{|_F}\cong\cO_{\PP^1}(1)^{\oplus r}$, on any $\varphi$-fiber $F$. As a consequence of the bijective correspondence induced by Proposition \ref{bijection}, in 
 \cite{fa-fl2} we deduced therefore that 
 $\cU_r= \xi \otimes \varphi^*\big(\cH_r (-c_1(\mathcal E_e))\big)$, with $\mathcal H_r$ Ulrich w.r.t. $c_1(\mathcal E_e)$ on $\FF_e$ as above.

Then the sequence \eqref{eq:coker-pullback} reads
\begin{eqnarray}\label{eq:coker-pullback1}
\xymatrix{ 
0 \to\overline{\sA}\ar[r]^-{\overline{\phi}} & \overline{\sB}\to \cU_r\to 0.}                                  
 \end{eqnarray}  In particular, for those morphisms $\overline{\phi} \in \rm{Hom}_{X_e} ( \overline{\sA}, \overline{\sB})$ such that ${\rm coker}(\overline{\phi}) = \mathcal U_r$, one has that ${\rm coker}(\overline{\phi}) $ is locally free, of rank $r$ and it is moreover Ulrich on $X_e$ w.r.t. the tautological polarization $\xi$, with Chern classes 
 $c_i ({\rm coker}(\overline{\phi})) = c_i(\mathcal U_r)$, $1 \leq i \leq 3$, computed by iterative constructions of the vector bundles $\mathcal U_r$ as in \cite{fa-fl2} (e.g. $c_1$ is reminded in Theorem \ref{Main Theorem} above). \color{black}

Let ${\overline{\phi}_{gen}} \in  \rm{Hom}_{X_e} (\overline{\sA},\overline{\sB})$ be general;  since 
\begin{eqnarray*}
\overline{\sA}^{\vee} \otimes \overline{\sB} = \varphi^*\big(\sA^{\vee}\otimes \sB) = 
 \varphi^*\big(\mathcal O_{\FF_e}(0, 1)^{\oplus (\gamma\,\delta)} \oplus \mathcal O_{\FF_e}(1, e)^{\oplus (\gamma\,\tau)}\big),
 \end{eqnarray*} i.e. $\sA^{\vee}\otimes \sB$ is globally generated, so $\overline{\sA}^{\vee} \otimes \overline{\sB}$ is also globally  generated. Therefore, by \cite[Thm.\,4.2]{ba},  {(cf. also \cite[Thm.\,2]{ba2}) ${\overline{\phi}_{gen}}$ is injective and it gives rise to an exact sequence 
$$\xymatrix{ 
0 \to\overline{\sA}\ar[r]^-{{\overline{\phi}_{gen}}} & \overline{\sB}\to {\rm coker}({\overline{\phi}_{gen}})\to 0.}   
$$Since $\overline{\phi} \in \rm{Hom}_{X_e} ( \overline{\sA}, \overline{\sB})$ as in \eqref{eq:coker-pullback1} is such that ${\rm coker}(\overline{\phi})= \mathcal U_r$ is locally free, then also ${\rm coker}({\overline{\phi}_{gen}})$ is locally free, as locally freeness is an open condition on the (irreducible) vector space $ \rm{Hom}_{X_e} ( \overline{\sA}, \overline{\sB})$.   Moreover, the  rank of   ${\rm coker}({\overline{\phi}_{gen}})$ is  given by $\delta+\tau-\gamma=r = \rm{rank}(\cU_r)$,  with $\gamma,  \delta, \tau$ as in the proof of Theorem \ref{thm:moduli_UlrichFe}. Furthermore,  once again  from  the irreducibility of  $\rm{Hom}_{X_e} (\overline{\sA},\overline{\sB})$ and  from the constancy of Chern classes in irreducible flat families of vector bundles of given rank (or even from the fact that $\mathcal U_r$ and ${\rm coker}({\overline{\phi}_{gen}})$ are both locally free cokernels of injective vector bundle morphisms in $\rm{Hom}_{X_e} ( \overline{\sA}, \overline{\sB})$)  one  has that  
 \begin{eqnarray}\label{ci}
 c_i({\rm coker}({\overline{\phi}}_{gen}))= c_i(\cU_r)  \mbox{ for $0\le i\le3.$}
  \end{eqnarray}
Finally  since $\cU_r$ is Ulrich on $X_e$ w.r.t. $\xi$  we have
 \begin{eqnarray*}
h^i(\cU_r(-j\xi))= 0 \quad  \mbox{for $0\le  i \le 3$ and $1\le j \le 3$,} 
 \end{eqnarray*}
  then by semicontinuity 
   \begin{eqnarray*}
h^i({\rm coker}({\overline{\phi}_{gen}})(-j\xi))= 0\quad   \mbox{for $0\le  i \le 3$ and $1\le j \le 3$;}  
\end{eqnarray*}
hence ${\rm coker}({\overline{\phi}}_{gen})$ is Ulrich w.r.t. $\xi$.}

The fact that $\rm{Hom}_{X_e} (\overline{\sA},\overline{\sB})$  is irreducible implies that it must dominate the modular component $\cM(r)$ (as in Theorem \ref{Main Theorem}) containing $[\mathcal U_r]$ as its general point,  which  therefore implies that   $\cM(r)$ is unirational. The generic smoothness of   $\cM(r)$ as well as its dimension formula have already being proved in  Theorem \ref{Main Theorem} - (b), (c) (more precisely in \cite[Main Theorem]{fa-fl2}). 
 \end{proof}

  %
  %


\begin{thebibliography}{10} 


\bibitem{ant}
V.~Antonelli,
\newblock Characterization of Ulrich bundles on Hirzebruch surfaces.
\newblock {\em Revista Matem$\grave{a}$tica Complutense}, 34: 43--74, 2021. doi.org/10.1007/s13163-019-00346-7




\bibitem{a-c-mr}
M. Aprodu,  L. Costa, and R. M. Mir\'o-Roig, 
\newblock Ulrich bundles on ruled surfaces.
\newblock {\em  J. Pure Appl. Algebra}, 222, no. 1 (2018): 131--138.



\bibitem{ba} C.~B\u{a}nic\u{a},
\newblock Smooth reflexive sheaves. \newblock {\em   In: Proceedings of the Colloquium on Complex Analysis and the Sixth Romanian-Finnish Seminar}, vol. 36, 571--593 (1991).




\bibitem{ba2} C.~B\u{a}nic\u{a},
\newblock Faisceaux r\'eflexifs lisses. \newblock {\em C. R. Acad. Sci. Paris}, t. 31, 
S\'erie I, 253--255 (1990).




\bibitem{BESO}
M.~Beltrametti,  A.~J. Sommese,
\newblock The Adjunction Theory of Complex Projective Varieties,
 vol.~16 of \newblock {\em Expositions in Mathematics}.
 De Gruyter, 1995.


 


\bibitem{c-h-g-s}
M. Casanellas, R. Hartshorne, F. Geiss, and F.-O. Schreyer,  
\newblock Stable Ulrich bundles. 
\newblock {\em Internat. J. Math.}, 23, no. 8 (2012): 1250083, 50.



\bibitem{cas}
G. Casnati, 
\newblock Special Ulrich bundles on non-special surfaces with pg = q = 0.
\newblock {\em  Int. J. Math.}, 28 (2017) no. 8, 1750061, 18 pp.



\bibitem{cas-kim}
G. Casnati, Y. Kim, 
\newblock Ulrich bundles on blowing up (and an erratum). 
\newblock{C. R. Acad. Sci. Paris}, Ser. I, 355(3), (2017): 1291-1297.













\bibitem{CM} L. Costa, R. M. Mir\'o--Roig,
\newblock Rationality of moduli spaces of vector bundles on rational surfaces.
\newblock \emph{Nagoya Math. J.}  165, 43--69 (2002)




\bibitem{CMP} L. Costa, R. M. Mir\'o--Roig, J. Pons--Llopis, 
\newblock Ulrich bundles from commutative algebra to algebraic geometry, 
\newblock \emph{Studies in Math.} {\bf 77} (2021),De Gruyter.

\bibitem{f-pl} D. Faenzi,  J. Pons--Llopis,
\newblock The Cohen-Macaulay representation type of projective arithmetically Cohen-Macaulay varieties.
\newblock {\em Epijournal de G\'eometrie Alg\'ebrique}, Vol. 5 (2021), No. 8, pp. 1--37 



\bibitem{f-lc-pl}
 M.~L. Fania, M. Lelli-Chiesa, J. Pons Llopis,
\newblock Ulrich bundles on Three-Dimensional Scrolls.
\newblock {\em International Mathematics Research Notices}, Vol. 2021, No. 17, pp. 13478--13507 Advance Access Publication November 18, 2019.  doi.org/10.1093/imrn/rnz288


\bibitem{fa-fl2}
M.~L. Fania, F.~Flamini,
\newblock Ulrich Bundles on some threefold scrolls over ${\FF}_e$,
\newblock {\em Advances in Mathematics},  436, January  2024, 109409. doi.org/10.1016/j.aim.2023.109409



\bibitem{H}
R.~Hartshorne,
\newblock Algebraic Geometry.
 \newblock {\em Number~52 in GTM}. Springer Verlag, New York - Heidelberg -
Berlin, 1977.









\end{thebibliography}
\end{document}